\documentclass[10pt]{amsart} 
\usepackage{tikz-cd}
  
\usepackage{amsmath,amsthm,amssymb,mathrsfs,xypic}
 
\renewcommand{\geq}{\geqslant}
\renewcommand{\leq}{\leqslant}
\newcommand{\Osh}{{\mathcal O}}                        

\newcommand{\K}{\mathrm{K}}      
                      
\renewcommand{\emptyset}{\varnothing}
\newcommand{\KK}{\mathbf{K}}
\newcommand{\FF}{\mathbf{F}}
\newcommand{\PP}{\mathbb{P}} 
\newcommand{\QQ}{\mathbb{Q}} 
\newcommand{\ZZ}{\mathbb{Z}} 

\newtheorem{theorem}{Theorem}[section]
\newtheorem{lemma}[theorem]{Lemma}

\newtheorem{conjecture}[theorem]{Conjecture}
\theoremstyle{definition}

\newtheorem{remark}[theorem]{Remark}
\newtheorem{example}[theorem]{Example}

\numberwithin{equation}{section}

\begin{document}

\title[On Vojta's harder implication for admissible pairs]{On Vojta's harder implication, height inequalities for admissible pairs, points of bounded degree and Deligne-Mumford stacks}

\author{Nathan Grieve}

\address{
Department of Mathematics, National Taiwan University, Astronomy and Mathematics Building 5F, No. 1, Sec. 4, Roosevelt Rd., Taipei 10617, Taiwan (R.O.C.); School of Mathematics and Statistics, 4302 Herzberg Laboratories, Carleton University, 1125 Colonel By Drive, Ottawa, ON, K1S 5B6, Canada; 
D\'{e}partement de math\'{e}matiques, Universit\'{e} du Qu\'{e}bec \`a Montr\'{e}al, Local PK-5151, 201 Avenue du Pr\'{e}sident-Kennedy, Montr\'{e}al, QC, H2X 3Y7, Canada
}
\email{nathan.m.grieve@gmail.com}%

\begin{abstract}
We expand on the concept of \emph{admissible pairs} from \cite{Levin:GCD} and explore its relation with the main Diophantine arithmetic inequalities, with discriminant term and for points of bounded degree, that have been predicted by Vojta \cite{Vojta:1998}.  In this context, among other new results, we prove a \emph{harder implication} which is in the spirit of Vojta's approach to the abc Conjecture (from \cite{Vojta:1998}).  As our main result, and application of our viewpoint here, we deduce for the case of certain general type nonsingular Deligne-Mumford stacks, with projective course moduli space, a form of the Bombieri-Lang Conjecture for $(D_0,S)$-integral points of bounded degree.   A key input for this is a slicing theorem, for Deligne-Mumford stacks, that was obtained by Abramovich and V\'{a}rilly-Alvarado, \cite{Abramovich:VarillyAlvarado:Pera:2017}, and building on earlier work of Kresch and Vistoli \cite{Kresch:Vistoli:2004}.  Another important ingredient is an inequality of Silverman, from \cite{Silverman:1984}, which bounds the discriminant of points in projective space in terms of their heights.  As an illustration of our results, we discuss them within the context of the interesting work of Abramovich and Harris \cite{Abramovich:Harris:1991} and others.
\end{abstract}

\thanks{
\emph{Mathematics Subject Classification (2020):} 11J25, 14G05, 11J97. \\
\emph{Key Words:} Vojta's Main Conjecture, abc Conjecture, Schmidt's Subspace Theorem, Bombieri-Lang Conjecture, Deligne-Mumford stacks, logarithmic pairs, truncated counting function. \\
The author thanks the Natural Sciences and Engineering Research Council of Canada for their support through his grants, DGECR-2021-00218 and RGPIN-2021-03821, and also the National Science and Technology Council (Taiwan) for their support through his grants 115-2115-M-002-003-MY3 and 115-2811-M-002-043.\\
ORCID: https://orcid.org/0000-0003-3166-0039
\\
\today.  
}

\maketitle

\section{Introduction}

Our purpose here, is to initiate a general study of $(D_0,S)$-integral points of bounded degree in a nonsingular and geometrically irreducible projective variety $X$ defined over a base number field $\KK$.  Here, $D_0$ is a normal crossings divisor on $X$ and $S \subset M_{\KK}$ is a finite subset of $\KK$'s places that contains all infinite places.  Our main results are Theorems \ref{Vojta:logical:implications} and \ref{lang:conjecture:bounded:degree}.  They are derived as consequences of conjectured Diophantine arithmetic height inequalities that are in the spirit of those that have been proposed by Vojta (\cite[Conjecture 2.1]{Vojta:1998} and \cite[Conjecture 1.13]{Levin:GCD}).  Our approach here also yields new Diophantine arithmetic applications for Deligne-Mumford stacks (see Theorems \ref{admissible:pair:DM:stack:conj} and \ref{DM:Bombieri-Lang}).

To place matters into perspective, it is helpful to recall the more recent results about points of bounded degree on curves.  The starting point is Faltings' Diophantine approximation theorem for Abelian varieties \cite{Faltings:1991}.  The theorem asserts that if a subvariety of an Abelian variety does not contain any translate of a positive dimensional Abelian subvariety, then it must contain at most finitely many $\KK$-rational points.  In light of this, given a genus $g$ curve $C$ over $\KK$ together with a finite extension $\FF / \KK$ and a positive integer $g \geq 1$, following the viewpoint of Abramovich and Harris, \cite{Abramovich:Harris:1991}, let $\Gamma_{C,d}(\FF)$ denote the set of $C$'s algebraic points that have degree at most equal to $d$ over $\FF$.   The work \cite{Abramovich:Harris:1991} raises the question of characterizing when $\Gamma_{C,d}(\FF)$ is infinite.  As emphasized in \cite{Abramovich:Harris:1991} the main subtle point is the extent to which the \emph{Brill-Noether loci}
$$
W_d(C) := \{L \in \operatorname{Pic}^d(C) : h^0(C,L) \geq 1 \} \subseteq \operatorname{Pic}^d(C)
$$
contain translates of positive dimensional subvarieties of $\operatorname{Pic}^d(C)$.

Based on this principle, the main more recent results about the sets $\Gamma_{C,d}(\FF)$ are summarized in the following way.
(i) If $d =2$, then $\#\Gamma_{C,2}(\FF) = \infty$ if and only if $C$ is hyperelliptic or bielliptic. (See \cite{Harris:Silverman:1991}.) (ii)
If $1 \leq d \leq 4$ and if $g \not = 7$ if $d = 4$, then $\# \Gamma_{C,d}(\FF) = \infty$ if and only if $C$ admits a map of degree at most $d$ to $\PP^1$ or an elliptic curve and defined over $\overline{\KK}$.  (See \cite{Abramovich:Harris:1991}.)
(iii)  For all $d \geq 4$, there exist curves $C$ with $\#\Gamma_{C,d}(\KK) = \infty$ but which do not admit a map of degree $d$ or less to $\PP^1$ or an elliptic curve.  (See \cite{Debarre:Fahlaoui:1993}.)
(iv) 
If $\# \Gamma_{C,d}(\KK) = \infty$, then $C$ admits a map of degree at most $2d$ to $\PP^1$ and defined over $\KK$.  (See \cite{Frey:1994}.)
(v) Denoting the \emph{arithmetic degree of irrationality} by
$$\operatorname{a.irr}_{\KK}(C) := \min \{d : \# \Gamma_{C,d}(\KK) = \infty\}$$ 
and the \emph{gonality} by
$$\operatorname{gon}_{\KK}(C) := \min \{d : \text{$C$ admits a dominant rational degree $d$ map $C \rightarrow \PP^1_{\KK}$} \} $$ 
then 
$$
\frac{\operatorname{gon}_{\KK}(C)}{2} \leq \operatorname{a.irr}_{\KK}(C) \leq \operatorname{gon}_{\KK}(C) \text{.}
$$ (See \cite{Smith:Vogt:2022}.)

In terms of integral points of bounded degree, the work of Levin \cite{Levin:2016} gives necessary and sufficient conditions for a nonsingular projective completion of a non-singular affine curve to possess infinitely many integral points of bounded degree.  Similar to the techniques used to prove the above mentioned results about the sets $\Gamma_{C,d}(\FF)$, the approach of Levin is based on the study of the $d$-fold symmetric product $C^{(d)}$ combined with a suitable application of Faltings' result from \cite{Faltings:1991}.

Our methods here, to study $(D_0,S)$-integral points of bounded degree in nonsingular projective varieties,  are based on suitable applications of certain variants of
 \cite[Conjecture 2.1]{Vojta:1998} and  \cite[Conjecture 1.13]{Levin:GCD}.  We formulate these new conjectural statements here.  Our framework is sufficiently more general in that it allows for a formulation that is within the context of Deligne-Mumford stacks.  This builds on   \cite[Section 3]{Abramovich:VarillyAlvarado:Pera:2017} and \cite[Theorem 1]{Kresch:Vistoli:2004}; a key point is stated as Theorem \ref{DM:slicing:lemma}.   

Let us recall a statement of the celebrated Vojta's Main Conjecture with discriminant term and for points of bounded degree.
\begin{conjecture}[{Compare with \cite[Conjecture 2.1]{Vojta:1998}}, {\cite[Conjecture 14.4.14]{Bombieri:Gubler}}]\label{Vojta:Main:Conj:Disc:Bounded:Degree}
Let $D_1$ be a normal crossings divisor on a nonsingular complete variety $X$.  Let $L$ be a big line bundle on $X$.  Let $d \in \ZZ_{>0}$ and let $\epsilon > 0$.  Then there exists a proper Zariski closed subset 
$Z \subsetneq X$
which is such that
\begin{equation}\label{Vojta:counting:inequality}
n_S(D_1,x) + \mathrm{d}_{\KK}(x) \geq h_{\K_X + D_1}(x) - \epsilon h_L(x) - \mathrm{O}(1)
\end{equation}
for all 
$x \in X(\overline{\KK}) \setminus Z$
which have the property that
$[\KK(x):\KK] \leq d \text{.}$
\end{conjecture}

In the inequality \eqref{Vojta:counting:inequality}, $n_S(D_1,x)$ is the \emph{counting function}.  Recall, its relation to the \emph{proximity function} $m_S(D_1,x)$ via the \emph{height function} $h_{\Osh_X(D_1)}(x)$
\begin{equation}\label{eqn:2}
h_{\Osh_X(D_1)}(x) = m_S(D_1,x) + n_S(D_1,x) + \mathrm{O}(1) \text{.}
\end{equation}

Next, we discuss Levin's concepts of \emph{admissible pair} $(X,U)$ and $(D_0,S)$-integral points \cite[Section 1.2]{Levin:GCD}.  In particular, $D_0$ is a normal crossings divisor on a nonsingular complete variety $X$ and $U := X \setminus D_0 \text{.}$  
The data $(X,U)$ is the \emph{admissible pair} that is associated to the complete nonsingular normal crossings pair $(X,D_0)$.  

Within this context, as in \cite[Section 1.2]{Levin:GCD}, we say that a subset 
\begin{equation}\label{eqn:3}
R \subset X(\overline{\KK}) \setminus \operatorname{Supp} D_0
\end{equation}
constitutes a collection of \emph{$(D_0,S)$-integral points} if 
\begin{equation}\label{eqn:4}
m_S(D_0,x) = h_{\Osh_X(D_0)}(x) + \mathrm{O}(1) \text{ for all $x \in R$.}
\end{equation}

In the spirit of Conjecture \ref{Vojta:Main:Conj:Disc:Bounded:Degree}, we propose the following more general formulation of \cite[Conjecture 1.13]{Levin:GCD}.
\begin{conjecture}[Compare with {\cite[Conjecture 1.13]{Levin:GCD}}]\label{Admissible:pairs:Vojta:Main:Conj:Disc:Bounded:Degree}
Let $D_0$ be a normal crossings divisor on a nonsingular complete variety $X$.  Let $D_1$ be an effective divisor on $X$ which has the property that $D_1+D_0$ is a normal crossings divisor.  Let $L$ be a big line bundle on $X$.  Let $d \in \ZZ_{>0}$ and $\epsilon > 0$.  Then there exists a proper Zariski closed subset $Z \subsetneq X$ 
which is such that for each set of $(D_0,S)$-integral points
\begin{equation}\label{eqn:5}
R \subseteq X(\overline{\KK}) \setminus \operatorname{Supp} D_0
\end{equation}
the inequality
\begin{equation}\label{eqn:5a}
n_S(D_1,x) + \mathrm{d}_{\KK}(x) \geq h_{\K_X + D_0+D_1}(x) - \epsilon h_L(x) - \mathrm{O}(1)
\end{equation}
is valid for all $x \in R \setminus Z$ 
with the property that $[\KK(x):\KK] \leq d \text{.}$
\end{conjecture}

By analogy with \cite[Conjecture 2.3]{Vojta:1998}, we may also consider a \emph{truncated form} of Conjecture \ref{Admissible:pairs:Vojta:Main:Conj:Disc:Bounded:Degree}.

\begin{conjecture}\label{truncated:Admissible:pairs:Vojta:Main:Conj:Disc:Bounded:Degree} Given the hypothesis of Conjecture \ref{Admissible:pairs:Vojta:Main:Conj:Disc:Bounded:Degree}, the conclusion of Conjecture \ref{Admissible:pairs:Vojta:Main:Conj:Disc:Bounded:Degree} holds true with the inequality \eqref{eqn:5a} replaced by the truncated inequality
\begin{equation}\label{eqn:6}
n_S^{(1)}(D_1,x) + \mathrm{d}_{\KK}(x) \geq h_{\K_X + D_0+D_1}(x) - \epsilon h_L(x) - \mathrm{O}(1) \text{.}
\end{equation}
\end{conjecture} 

In the spirit of \cite[Theorem 3.1]{Vojta:1998}, our first result establishes the logical equivalence of Conjectures \ref{Vojta:Main:Conj:Disc:Bounded:Degree}, \ref{Admissible:pairs:Vojta:Main:Conj:Disc:Bounded:Degree} and \ref{truncated:Admissible:pairs:Vojta:Main:Conj:Disc:Bounded:Degree}.

\begin{theorem}\label{Vojta:logical:implications} 
There exists the following sequence of logical implications
$$
\text{
 Conjecture \ref{Vojta:Main:Conj:Disc:Bounded:Degree} $\iff$ 
Conjecture \ref{Admissible:pairs:Vojta:Main:Conj:Disc:Bounded:Degree} $\iff$ Conjecture \ref{truncated:Admissible:pairs:Vojta:Main:Conj:Disc:Bounded:Degree}.
}
$$
\end{theorem}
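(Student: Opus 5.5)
We prove the chain by a cycle of implications: Conjecture \ref{Vojta:Main:Conj:Disc:Bounded:Degree} $\Rightarrow$ Conjecture \ref{Admissible:pairs:Vojta:Main:Conj:Disc:Bounded:Degree} $\Rightarrow$ Conjecture \ref{truncated:Admissible:pairs:Vojta:Main:Conj:Disc:Bounded:Degree} $\Rightarrow$ Conjecture \ref{Vojta:Main:Conj:Disc:Bounded:Degree}. The first two steps are formal; the last is the \emph{harder implication} in the sense of \cite[Theorem 3.1]{Vojta:1998}.

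\emph{Step 1: Conjecture \ref{Vojta:Main:Conj:Disc:Bounded:Degree} implies Conjecture \ref{Admissible:pairs:Vojta:Main:Conj:Disc:Bounded:Degree}.} Apply Conjecture \ref{Vojta:Main:Conj:Disc:Bounded:Degree} to the normal crossings divisor $D_0+D_1$, with the same $L$, $d$, $\epsilon$. This gives a proper closed $Z$ with
$$n_S(D_0+D_1,x)+\mathrm{d}_{\KK}(x)\geq h_{\K_X+D_0+D_1}(x)-\epsilon h_L(x)-\mathrm{O}(1)$$
off $Z$. The counting function is additive, so $n_S(D_0+D_1,x)=n_S(D_0,x)+n_S(D_1,x)+\mathrm{O}(1)$. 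For $x$ in a set $R$ of $(D_0,S)$-integral points, \eqref{eqn:4} and \eqref{eqn:2} give $n_S(D_0,x)=\mathrm{O}(1)$. Hence \eqref{eqn:5a} holds on $R\setminus Z$. Since $Z$ does not depend on $R$, this gives the uniformity required in Conjecture \ref{Admissible:pairs:Vojta:Main:Conj:Disc:Bounded:Degree}. One point to check is that the $\mathrm{O}(1)$ is allowed to depend on $R$ (through the constant in \eqref{eqn:4}); I read the statement that way.

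\emph{Step 2: Conjecture \ref{Admissible:pairs:Vojta:Main:Conj:Disc:Bounded:Degree} implies Conjecture \ref{truncated:Admissible:pairs:Vojta:Main:Conj:Disc:Bounded:Degree}.} Following Vojta, apply Conjecture \ref{Admissible:pairs:Vojta:Main:Conj:Disc:Bounded:Degree} on a cyclic cover $\pi\colon X'\to X$ ramified to order $m$ along $D_1$, with $m\gg 1/\epsilon$. If $D_1$ is not divisible in $\operatorname{Pic}$, first adjoin an auxiliary general hyperplane-type divisor, taken in general position so that normal crossings with $D_0+D_1$ is preserved. Then:
\begin{itemize}
\item Points $x$ of degree $\le d$ lift to points of degree $\le md$.
\item The discriminant satisfies $\mathrm{d}_{\KK}(x')\le \mathrm{d}_{\KK}(x)+(1-1/m)\,n^{(1)}_S(D_1,x)+\mathrm{O}(h_L)$-small terms, by a Chevalley--Weil type computation.
\item $\K_{X'}=\pi^*(\K_X+(1-1/m)D_1)$.
\item $n_S(\pi^*D_1^{\mathrm{red}}/m,x')\approx \frac1m n_S(D_1,x)$.
\item The preimage of $R$ is $(\pi^*D_0,S)$-integral.
\end{itemize}
Combining these turns \eqref{eqn:5a} on $X'$ into \eqref{eqn:6} up to an error $\frac1m h_{D_1}\le\epsilon h_L$, using bigness of $L$.

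\emph{Step 3: Conjecture \ref{truncated:Admissible:pairs:Vojta:Main:Conj:Disc:Bounded:Degree} implies Conjecture \ref{Vojta:Main:Conj:Disc:Bounded:Degree}.} Take $D_0=0$. Then every point is $(0,S)$-integral, and \eqref{eqn:6} gives \eqref{Vojta:counting:inequality} because $n^{(1)}_S\le n_S$.

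The main obstacle is Step 2. The delicate parts are controlling $\mathrm{d}_{\KK}$ on the cover uniformly in bounded degree, and ensuring that the exceptional set pushes forward to a proper closed subset independent of $R$. If Step 2 resists, I would instead try to establish the truncated-versus-untruncated equivalence directly via Step 3 combined with Vojta's original argument \cite[Theorem 3.1]{Vojta:1998} for $D_0=0$. I would then upgrade to general $D_0$ by noting that on $(D_0,S)$-integral points the $D_0$-contributions to truncated and untruncated counting functions are both $\mathrm{O}(1)$.
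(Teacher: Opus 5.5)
Your Steps 1 and 3 are fine. Together they use exactly the ingredients of the paper's Lemmas \ref{Levin:lemma:2:3} and \ref{admissible:pairs:easy:implication}.

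\textbf{The gap is in Step 2 as written.} A cyclic $m$-fold cover totally ramified along $D_1$ and nowhere else needs $\Osh_X(D_1)\in m\operatorname{Pic}(X)$. Since $m$ must grow as $\epsilon\to 0$, this essentially never holds. So the auxiliary divisor $H$ is unavoidable, and its class moves with $m$: $D_1+H\sim mB$, hence $h_H=m\,h_B-h_{D_1}+\mathrm{O}(1)$.
\begin{itemize}
\item If you adjoin $H$ to the boundary (legitimate by Lemma \ref{reduction:step:2:prime}), your bullets hold for $D_1+H$. But the final error is $\frac{1}{m}h_{D_1+H}=h_B+\mathrm{O}(1)$, which does not shrink as $m$ grows, so it cannot be made $\leq\epsilon h_L$.
\item If instead $H$ stays only in the branch locus, your second and third bullets are false. $\mathrm{d}_{\KK}(x')-\mathrm{d}_{\KK}(x)$ acquires $(1-\frac{1}{m})n^{(1)}_S(H,x)$, which is in general comparable to $h_H(x)$, not a small term. $\K_{X'}$ acquires $(1-\frac{1}{m})\pi^*H$. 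Your fourth bullet also fails: after resolving the cover over $D_1\cap H$ (the conjectures need $X'$ nonsingular), exceptional components lie in $(\pi^*D_1)_{\mathrm{red}}$ with multiplicity $<m$ in $\pi^*D_1$. Over an $A_{m-1}$ point, one of them has multiplicity $1$.
\end{itemize}
The second variant can be repaired. The two $H$-terms cancel because $n^{(1)}_S(H,x)\leq h_H(x)+\mathrm{O}(1)$, and using strict transforms the discrepancies are absorbed into an $\mathrm{O}(\frac{1}{m}h_{D_1})$ error. But that is precisely the missing content.

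The paper, following Vojta, avoids all of this with Lemma \ref{lemma:3:4}. It takes $e$-th roots of rational functions $f_i$ with $\operatorname{div}(f_i)=A-D_i$, for the auxiliary divisors $D_i\sim A$ of that lemma, chosen so that $A\cap D_1\cap\dots\cap D_n=\emptyset$. Then:
\begin{itemize}
\item no divisibility in $\operatorname{Pic}$ is needed;
\item the branch divisor $D^*$ is independent of $e$;
\item the components over the boundary have multiplicity $\geq e$.
\end{itemize}
So the error term $\frac{1}{e}h_{\Osh_X(D_1)}$ is at most $\frac{c}{e}h_L$, with $c$ fixed before $e$ is chosen. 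Lemmas \ref{lemma:3:2} and \ref{lemma:3:5} then do the work of your bullets.

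\textbf{Your fallback, by contrast, is a complete and correct argument, and it differs from the paper's.} Taking $D_0=0$ in Conjecture \ref{Admissible:pairs:Vojta:Main:Conj:Disc:Bounded:Degree} gives Conjecture \ref{Vojta:Main:Conj:Disc:Bounded:Degree}. Vojta's theorem \cite[Theorem 3.1]{Vojta:1998}, which is the case $D_0=0$ of the harder implication, then gives the truncated inequality for the normal crossings divisor $D_0+D_1$. It holds off an exceptional set that does not depend on $R$. On $(D_0,S)$-integral points you have
\[
n^{(1)}_S(D_0+D_1,x)\leq n^{(1)}_S(D_0,x)+n^{(1)}_S(D_1,x)
\quad\text{and}\quad
n^{(1)}_S(D_0,x)\leq n_S(D_0,x)+\mathrm{O}(1)=\mathrm{O}(1),
\]
which yields \eqref{eqn:6}.

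This route uses Vojta's theorem as a black box and is much shorter. The paper instead reruns the covering argument with $D_0$ carried along. That is self-contained and shows how integrality and discriminants behave on covers, which is the viewpoint it reuses for stacks. For Theorem \ref{Vojta:logical:implications} itself it gains nothing. You should present the fallback as your proof of Step 2.
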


We prove Theorem \ref{Vojta:logical:implications} in Section \ref{Section:proof:harder:implication}.  In a complementary direction, we now discuss topics that surround Theorem \ref{Vojta:logical:implications} within the context of Diophantine arithmetic features of Deligne-Mumford stacks.  We refer to the text \cite{Olsson:Stacks:2016} for the most basic theory of Deligne-Mumford stacks.  

Our starting point, here, is a covering theorem that can be extracted from the work of Abramovich and V\'{a}rilly-Alvarado \cite[Proposition 2.3 and Proof of Proposition 3.2]{Abramovich:VarillyAlvarado:Pera:2017}. 

\begin{theorem}[Compare with {\cite[Proposition 2.3 and Proof of Proposition 3.2]{Abramovich:VarillyAlvarado:Pera:2017}} and {\cite[Theorem 1]{Kresch:Vistoli:2004}}]\label{DM:slicing:lemma}
Suppose that $\mathcal{X}$ is a nonsingular Deligne-Mumford stack with projective coarse moduli scheme.  The following assertions hold true.
\begin{enumerate}
\item[(i)]{$\mathcal{X}$ is a quotient stack and admits a finite surjective morphism
\begin{equation}\label{DM:Covering:Slicing:Lemma}
\pi \colon Y \rightarrow \mathcal{X}
\end{equation}
from a nonsingular projective $\KK$-variety $Y$. 
}
\item[(ii)]{If 
$\mathcal{D} \subset \mathcal{X}$ 
is a normal crossings divisor, then the morphism \eqref{DM:Covering:Slicing:Lemma} can be chosen in such a way that
$
D' := \pi^* \mathcal{D} \subset Y
$
is a normal crossings divisor and such that the ramification divisor meets each stratum of $D'$ properly.
}
\item[(iii)]{If 
$x \in \mathcal{X}(\overline{\KK})$
has the property that 
$[\KK(x) : \KK] \leq d \text{,}$ then $x$ is the image of some 
$y \in Y(\overline{\KK})$ 
which has the property that 
$[\KK(y) : \KK] \leq d \cdot \operatorname{deg}(\pi) \text{.}$
}
\end{enumerate}
\end{theorem}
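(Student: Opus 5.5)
The plan is to assemble the statement from the cited results rather than argue from scratch, in three steps matching (i)--(iii).

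For (i), I will invoke Kresch--Vistoli \cite[Theorem 1]{Kresch:Vistoli:2004}. A smooth separated Deligne-Mumford stack of finite type over a field, with quasi-projective coarse moduli space, is a quotient stack $[V/G]$ with $G$ a linear algebraic group. Kresch and Vistoli also produce a finite flat surjection $Z \to \mathcal{X}$ from a scheme $Z$, and $Z$ can be taken quasi-projective since the coarse space is projective. To obtain a \emph{nonsingular projective} $Y$, I will follow \cite[Proposition 2.3]{Abramovich:VarillyAlvarado:Pera:2017}. Write $\mathcal{X} = [V/\mathrm{GL}_n]$, where $V$ is the frame bundle of a vector bundle $E$ on $\mathcal{X}$ with faithful stabilizer actions. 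Embed a relative flag-type bundle $P \to \mathcal{X}$, which is a projective scheme over the coarse space $X_{\mathrm{c}}$ because the stabilizers act freely on it. Then cut $P$ by general hyperplane sections, relative to a very ample line bundle on $P$, of complementary dimension. Bertini, applied on a smooth atlas, gives that the resulting $Y \subset P$ is nonsingular and projective, and that $Y \to \mathcal{X}$ is finite and flat. Generic choice of the sections gives surjectivity.

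For (ii), I will refine the Bertini argument. The sections are general members of a base-point-free linear system on $P$, and $P \to \mathcal{X}$ is smooth. So $P \times_{\mathcal{X}} \mathcal{D}$ is a normal crossings divisor, and all its strata are smooth. Applying Bertini simultaneously to $P$ and to each of the finitely many strata shows that $Y$ meets every stratum transversally. Hence $D' = \pi^*\mathcal{D}$ is normal crossings on $Y$. The ramification locus of $Y \to \mathcal{X}$ is cut out by the degeneracy of the restricted projection. A dimension count on each stratum, again with general choices, shows it has the expected codimension inside each stratum, so it meets each stratum properly. This is where I follow the proof of \cite[Proposition 3.2]{Abramovich:VarillyAlvarado:Pera:2017}.

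For (iii), take $x \colon \operatorname{Spec} \FF \to \mathcal{X}$ with $[\FF:\KK] \leq d$. The fibre $Y \times_{\mathcal{X}} \operatorname{Spec}\FF$ is a finite $\FF$-scheme of degree $\deg(\pi)$ because $\pi$ is finite flat. It is nonempty because $\pi$ is surjective. Any closed point $y$ in this fibre then has residue field of degree at most $\deg(\pi)$ over $\FF$, so $[\KK(y):\KK] \leq d\deg(\pi)$.

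The main obstacle is (ii): making the Bertini choices over a stack, uniformly across all strata of $\mathcal{D}$, while keeping $\pi$ finite. To handle it I will first work étale-locally on $X_{\mathrm{c}}$ and then rely on the openness of the transversality conditions in the parameter space of sections.
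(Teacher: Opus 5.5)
Your overall plan matches the paper's: its proof of this theorem is only a citation of Proposition 2.3 and of the proof of Proposition 3.2 of Abramovich--V\'arilly-Alvarado, which rest on Kresch--Vistoli. Your argument for (iii) is correct and is how (iii) is obtained there: take a closed point of the nonempty finite $\FF$-scheme $Y\times_{\mathcal{X}}\operatorname{Spec}\FF$ of degree $\deg(\pi)$.

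The gap is in your reconstruction of (i), on which your (ii) also rests. You claim that a relative flag-type bundle $P\to\mathcal{X}$ is a projective scheme over the coarse space ``because the stabilizers act freely on it''. This is false whenever $\mathcal{X}$ has a nontrivial stabilizer, which is the only interesting case.
\begin{itemize}
\item Every $g\in\mathrm{GL}_n(\overline{\KK})$ has an eigenvector and stabilizes a complete flag (indeed subspaces of every dimension). So every stabilizer element has fixed points in each fibre of any projective, Grassmannian or flag bundle built from $E$.
\item Faithfulness on $E_x$ makes the action free only on frames, and the frame bundle is affine, not proper, over $\mathcal{X}$.
\item Hence your $P$ is again a Deligne--Mumford stack with nontrivial stabilizers. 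A ``very ample line bundle on $P$'' is not available; only bundles from the coarse space of $P$ can be used.
\item A general complete intersection $Y\subset P$ can then pass through stacky points, so it need not be a scheme.
\end{itemize}
For example, suppose $\dim\mathcal{X}\geq 2$, $\mathcal{X}$ has $\mu_2$-stabilizers along a divisor, $E$ is a faithful line bundle and $P=\PP(E\oplus\Osh)$. Then the stacky locus of $P$ is two sections over that divisor, of dimension $\dim\mathcal{X}-1\geq 1$, and every ample hypersurface section of the coarse space meets it.

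What is missing is an avoidance argument.
\begin{itemize}
\item Choose the bundle so that its locus of nontrivial stabilizers has codimension larger than $\dim\mathcal{X}$. For instance, $P=\PP\bigl((E\oplus\Osh)^{\oplus N}\bigr)$ with $N>\dim\mathcal{X}$ works: for $g\neq 1$ the element $g\oplus 1$ is not scalar, so its fixed locus in each fibre is a union of projectivized proper eigenspaces of codimension at least $N$. The trivial summand is needed because scalar stabilizers act trivially on $\PP(E^{\oplus N})$.
\item Cut $P$ by as many general sections of a sufficiently positive line bundle on the projective coarse space of $P$ as the fibre dimension. For dimension reasons the result misses the stacky locus, so it lies in the open part where $P$ is a quasi-projective scheme.
\item Only then do the remaining properties follow. $Y$ is smooth by Bertini on an atlas and surjective because it meets every fibre. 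It is quasi-finite by a parameter count needing enough positivity along the fibres (this is not a consequence of Bertini), hence finite. It is flat by miracle flatness.
\end{itemize}
Your transversality arguments for (ii) can then be run inside that scheme locus. Alternatively, simply cite these results as the paper does rather than re-deriving them.
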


\begin{proof}
See \cite[Proposition 2.3]{Abramovich:VarillyAlvarado:Pera:2017} and \cite[Proof of Proposition 3.2]{Abramovich:VarillyAlvarado:Pera:2017}.
\end{proof}

Theorem \ref{DM:slicing:lemma} allows for Diophantine arithmetic questions on $\mathcal{X}$ to be formulated in terms of the cover \eqref{DM:Covering:Slicing:Lemma}.  For example, Diophantine arithmetic questions for points $x \in \mathcal{X}(\KK)$ translate into Diophantine arithmetic questions for those points of the cover $y \in Y(\overline{\KK})$ that have degree at most $\operatorname{deg}(\pi)$ over $\KK$.

This principle is expanded upon in Theorem \ref{admissible:pair:DM:stack:conj} below.  It can be compared with \cite[Proposition 3.2]{Abramovich:VarillyAlvarado:Pera:2017} and is our formulation of the equivalent Conjectures \ref{Admissible:pairs:Vojta:Main:Conj:Disc:Bounded:Degree} and \ref{truncated:Admissible:pairs:Vojta:Main:Conj:Disc:Bounded:Degree} for Deligne-Mumford stacks.  

\begin{theorem}\label{admissible:pair:DM:stack:conj}
Suppose that $\mathcal{X}$ is a nonsingular Deligne-Mumford stack with projective coarse moduli scheme $X$.  Let $\mathcal{D}_0$ be a normal crossings divisor on $\mathcal{X}$ and let $\mathcal{D}_1$ be an effective divisor on $\mathcal{X}$ which has the property that $\mathcal{D}_0 + \mathcal{D}_1$ is a normal crossings divisor on $\mathcal{X}$.  Fix a finite surjective morphism 
$\pi \colon Y \rightarrow \mathcal{X}$ 
from a nonsingular projective $\KK$-variety $Y$ which has the property that
\begin{enumerate}
\item[(i)]{ 
$
D' := D_0' + D_1' = \pi^* \mathcal{D}_0 + \pi^* \mathcal{D}_1
$
is a normal crossings divisor on $Y$; and
}
\item[(ii)]{the ramification divisor 
$\operatorname{Ram}(\pi)$
meets each stratum of $D'$ properly.   
}
\end{enumerate}
Let $\mathcal{L}$ be a big line bundle on $\mathcal{X}$ and let $L' = \pi^*\mathcal{L} \text{.}$  Assume that the conclusion of Conjecture \ref{truncated:Admissible:pairs:Vojta:Main:Conj:Disc:Bounded:Degree} holds for $Y$ with respect to $D_1'$, $L'$ and $(D_0',S)$-integral points on $Y$.  Then the conclusion of Conjecture \ref{truncated:Admissible:pairs:Vojta:Main:Conj:Disc:Bounded:Degree} holds for $\mathcal{X}$ with respect to $\mathcal{D}_1$, $\mathcal{L}$ and $(\mathcal{D}_0,S)$-integral points.

In more precise terms, let $d \in \ZZ_{>0}$ and $\epsilon > 0$.    Then there exists a proper Zariski closed subset 
$\mathcal{Z} \subsetneq \mathcal{X}$ 
such that if $Z' = \pi^{-1} (\mathcal{Z})$ 
then for each set of $(D_0,S)$-integral points
$R' \subseteq Y(\overline{\KK}) \setminus \operatorname{Supp} D_0'$
the inequality
$$
n_S^{(1)}(D_1,y') + \mathrm{d}_{\KK}(y') \geq h_{\K_Y + D_0'+D_1'}(y') - \epsilon h_L(y') - \mathrm{O}(1)
$$
is valid for all $y' \in R' \setminus Z'$ 
with the property that $[\KK(y'):\KK] \leq d \cdot \operatorname{deg}(\pi)$ 
and $\pi(y') \in X \setminus Z \text{.}$
\end{theorem}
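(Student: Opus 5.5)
The plan is to transfer the truncated inequality from $Y$ down to $\mathcal{X}$ along the cover $\pi$, using Theorem \ref{DM:slicing:lemma}(iii) to lift points of bounded degree. Fix $d$ and $\epsilon$. We apply the hypothesis, namely Conjecture \ref{truncated:Admissible:pairs:Vojta:Main:Conj:Disc:Bounded:Degree} on $Y$, with degree bound $d \cdot \operatorname{deg}(\pi)$, the big line bundle $L'=\pi^*\mathcal{L}$ (big because $\pi$ is finite surjective) and the same $\epsilon$. This gives a proper closed subset $W \subsetneq Y$. We then set $\mathcal{Z} := \pi(W)$. Since $\pi$ is finite, hence proper and closed with fibres of dimension zero, and $Y$ is irreducible of the same dimension, $\mathcal{Z}$ is a proper closed substack. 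Moreover $Z' = \pi^{-1}(\mathcal{Z}) \supseteq W$, so the inequality on $Y$ holds off $Z'$. This already gives the displayed ``more precise'' formulation. The extra condition $\pi(y') \notin Z$ is automatic from $y' \notin Z'$.

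To deduce the conclusion on $\mathcal{X}$ itself, we take $(\mathcal{D}_0,S)$-integral points $R \subseteq \mathcal{X}(\overline{\KK})$ of degree at most $d$ outside $\mathcal{Z}$. For each such point we choose a lift $y$ as in Theorem \ref{DM:slicing:lemma}(iii), with $[\KK(y):\KK] \le d\operatorname{deg}(\pi)$, and collect these lifts into a set $R'$. We first check that $R'$ is a set of $(D_0',S)$-integral points. This follows from functoriality of Weil functions: $m_S(\pi^*\mathcal{D}_0,y)=m_S(\mathcal{D}_0,\pi(y))+\mathrm{O}(1)$ and $h_{\pi^*\mathcal{D}_0}(y)=h_{\mathcal{D}_0}(\pi(y))+\mathrm{O}(1)$, where stack heights are defined through the coarse space or equivalently via such covers. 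We then compare each term of the inequality on $Y$ with its counterpart on $\mathcal{X}$.
\begin{itemize}
\item Heights pull back, giving $h_{L'}(y)=h_{\mathcal{L}}(x)+\mathrm{O}(1)$.
\item Riemann--Hurwitz for stacks gives $\K_Y=\pi^*\K_{\mathcal{X}}+\operatorname{Ram}(\pi)$, so that $h_{\K_Y+D_0'+D_1'}(y)=h_{\K_{\mathcal{X}}+\mathcal{D}_0+\mathcal{D}_1}(x)+h_{\operatorname{Ram}(\pi)}(y)+\mathrm{O}(1)$.
\item The truncated counting function satisfies $n^{(1)}_S(D_1',y)\le n^{(1)}_S(\mathcal{D}_1,x)+(\text{ramification contribution})$. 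This holds because truncation at $1$ discards the multiplicities introduced by ramification along $\pi^*\mathcal{D}_1$, and hypothesis (ii) on proper intersection with strata controls the overlaps.
\item The discriminant satisfies $\mathrm{d}_{\KK}(y)\le \mathrm{d}_{\KK}(x)+(\text{ramification of } \KK(y)/\KK(x))$. The relative discriminant is bounded by the counting function $n_S(\operatorname{Ram}(\pi),y)$ plus $\mathrm{O}(1)$, via a Chevalley--Weil-type argument as in Vojta's treatment of finite covers \cite[Section 5]{Vojta:1998}.
\end{itemize}
Combining these, the $\operatorname{Ram}(\pi)$ contributions on both sides cancel up to $m_S(\operatorname{Ram}(\pi),y)\ge 0$ and $\mathrm{O}(1)$, which yields the desired inequality at $x$.

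The main obstacle will be this last bookkeeping step: the discriminant and truncated-counting comparison across the stacky cover. The difficulty is showing that the arithmetic ramification of $\KK(y)/\KK(x)$, together with the change in truncated counting, is absorbed by $h_{\operatorname{Ram}(\pi)}(y)$ with an error bounded by $\epsilon h_{L'}$. The first approach I would try is to work étale locally on $\mathcal{X}$, where $\pi$ looks like a quotient of a smooth cover by a finite group. There one can apply Vojta's discriminant inequality for generically finite morphisms, using hypothesis (ii) to guarantee that $\operatorname{Ram}(\pi)$ does not contain strata of $D'$, so that the local Weil functions separate as required. Any remaining loss is absorbed by shrinking $\epsilon$ and enlarging $\mathcal{Z}$ by the image of $\operatorname{Supp}\operatorname{Ram}(\pi)$ where necessary.
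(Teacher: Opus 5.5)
Your first paragraph is a complete proof, and it is the paper's proof. The ``in more precise terms'' clause is what the theorem means by the conclusion holding for $\mathcal{X}$, and the paper obtains it by applying the truncated conjecture on $Y$ with degree bound $d\cdot\operatorname{deg}(\pi)$. Your explicit choice $\mathcal{Z}=\pi(W)$, which is proper because $\pi$ is finite and $\dim W<\dim Y=\dim\mathcal{X}$, just makes precise a step the paper leaves implicit.

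The second half, descending the inequality to quantities on $\mathcal{X}$ itself, is not needed, and you should not rely on it: its discriminant step fails for genuinely stacky covers. Take the quotient map $\pi\colon\PP^1\to[\PP^1/\mu_2]$, with $\mu_2$ acting by $t\mapsto -t$. It is a $\mu_2$-torsor, hence \'etale, so $\operatorname{Ram}(\pi)=0$. Yet a $\QQ$-point $x$ of the stack lying over $s=t^2\in\QQ^{\times}$ lifts only to points $y$ with $\QQ(y)=\QQ(\sqrt{s})$. Their discriminants grow like roughly half the height of $s$.

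So $\mathrm{d}_{\KK}(y)-\mathrm{d}_{\KK}(x)$ cannot be bounded by $n_S(\operatorname{Ram}(\pi),y)+\mathrm{O}(1)$. Shrinking $\epsilon$ or enlarging $\mathcal{Z}$ does not rescue it, since these points are Zariski dense and the loss is of full height size. The arithmetic ramification is carried by the stacky locus, i.e.\ the branch locus of the coarse map $Y\to X$, not by $\operatorname{Ram}(\pi)$. Handling it directly on $\mathcal{X}$ requires a stack-theoretic discriminant, which is exactly the technicality the paper avoids by stating the conclusion on the cover.
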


\begin{proof}
The existence of such a suitable finite morphism $\pi \colon Y \rightarrow \mathcal{X}$ 
follows from Theorem \ref{DM:slicing:lemma}.  Working on $Y$, the conclusion of Theorem \ref{admissible:pair:DM:stack:conj} then follows from the conclusion of Conjecture \ref{truncated:Admissible:pairs:Vojta:Main:Conj:Disc:Bounded:Degree} applied to $Y$ with respect to $D_1'$, $L'$ and $(D_0',S)$-integral points.
\end{proof}

It is helpful to note that the approach of \cite[Section 3]{Abramovich:VarillyAlvarado:Pera:2017} is to formulate Diophantine arithmetic inequalities directly on the underlying stack.  In doing so, some additional, more technical, objects arise.  These include a stack theoretic discriminant term and also the absence of a Northcott property for big line bundles.
Our approach, is to work on the cover \eqref{DM:Covering:Slicing:Lemma} directly.  Among other features, this avoids having to deal with these technicalities explicitly.  

Further, as is clear upon examining the proof \cite[Proposition 3.2]{Abramovich:VarillyAlvarado:Pera:2017}, there is no loss of Diophantine arithmetic complexity by working on a such a cover \eqref{DM:Covering:Slicing:Lemma}.  Let us also mention that our approach can be used to formulate, given suitable Schmidt Subspace Theorem inequalities, with discriminant term and for points of bounded degree, on the cover \eqref{DM:Covering:Slicing:Lemma}, analogous Subspace Theorem type inequalities on the underlying Deligne-Mumford stack.    

Finally, it is important to mention the recent work \cite{Ellenberg:et:al:2023} which develops a theory of heights for Deligne-Mumford stacks which differs both from the approach that we employ here and also from the approach developed in \cite{Abramovich:VarillyAlvarado:Pera:2017}.  While, for the purpose of the present article, we refrain from formulating such results explicitly, we do note that two recent articles which pertain to this latter question include \cite{Grieve:points:bounded:degree} and \cite{Le:Giang:2023}.  They build on earlier works including \cite{Schlickewei:2003} and \cite{Levin:2014}.  Another recent related work is \cite{Grieve:qualitative:subspace} which, among other results, develops a theory of twisted Weil functions and, as an application, establishes a parametric form of the Subspace Theorem, with linear scattering, for linear systems. 

Our application of Theorems \ref{DM:slicing:lemma} and \ref{admissible:pair:DM:stack:conj}, applies to particular cases in which the canonical class $\K_{\mathcal{X}}$ is big.  In particular, we show that together Theorems \ref{DM:slicing:lemma} and \ref{admissible:pair:DM:stack:conj} imply a form of Bombieri-Lang Conjecture for Deligne-Mumford stacks.  We refer to \cite[Remark 14.3.7]{Bombieri:Gubler} for an introduction to the classical formulation of the Bombieri-Lang Conjecture.  We also mention that a formulation of Vojta's Conjecture for general type Deligne-Mumford stacks has been proposed and studied in \cite{Ascher:Javanpeykar:2017}.  The approach used there builds on that of \cite{Abramovich:VarillyAlvarado:Pera:2017}.  A key point to the proof of Theorem \ref{DM:Bombieri-Lang} below is a generalized form of an inequality of Silverman from \cite{Silverman:1984}.  The idea is to bound the discriminant of points in projective space in terms of their heights.   We refer to Lemma \ref{height:function:lower:bound:discriminant:term} for more details.

\begin{theorem}\label{DM:Bombieri-Lang} 
Let $\mathcal{X}$ be a nonsingular Deligne-Mumford stack with projective coarse moduli space.  Let $\mathcal{D}_0$ be a normal crossings divisor on $\mathcal{X}$.  Assume that the log canonical bundle $\K_{(\mathcal{X},\mathcal{D}_0)} := \K_{\mathcal{X}} + \mathcal{D}_0$ is big.  Fix a finite surjective morphism $\pi \colon Y \rightarrow \mathcal{X}$ from a nonsingular projective $\K$-variety $Y$.  Fix $d \geq 1$ and let $r := (d \cdot \operatorname{deg}(\pi))[\KK:\QQ]$.  Assume that we can write
$\K_{(Y,D_0)}^{\otimes m} \simeq M^{\otimes n} \otimes N$
for some ample line bundle $M$ and effective line bundle $N$ and some positive integers $m,n>0$ which are such that
$n > m m_0(2r-2) \text{.}$
Here, $m_0 > 0$ is such that $M^{\otimes m_0}$ is very ample.  Then, Conjecture \ref{Vojta:Main:Conj:Disc:Bounded:Degree} implies that the set 
$$
\Gamma_{(\mathcal{X},\mathcal{D}_0,S),d} := \{ \text{$(\mathcal{D}_0,S)$-integral points $x \in \mathcal{X} \setminus \operatorname{Supp}(\mathcal{D}_0)$ : $[\KK(x):\KK] \leq d$}\}
$$
is not Zariski dense.
\end{theorem}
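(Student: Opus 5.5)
We would pass to the cover and argue by contradiction on $Y$. Assume $\Gamma_{(\mathcal{X},\mathcal{D}_0,S),d}$ is Zariski dense in $\mathcal{X}$. By Theorem \ref{DM:slicing:lemma}(iii), each such $x$ is the image of some $y \in Y(\overline{\KK})$ with $[\KK(y):\KK] \leq d\cdot\operatorname{deg}(\pi)$. Since $\pi$ is finite surjective, the set $R'$ of these lifts is Zariski dense in $Y$. Functoriality of Weil functions and heights under $\pi$ (with $D_0' = \pi^*\mathcal{D}_0$) shows that $m_S(D_0',y) = h_{D_0'}(y) + \mathrm{O}(1)$ on $R'$, so $R'$ is a set of $(D_0',S)$-integral points of bounded degree $d\deg(\pi)$. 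By Theorem \ref{Vojta:logical:implications}, Conjecture \ref{Vojta:Main:Conj:Disc:Bounded:Degree} is equivalent to Conjecture \ref{Admissible:pairs:Vojta:Main:Conj:Disc:Bounded:Degree}. We apply the latter on $Y$ with $D_1 = 0$ (so $n_S(D_1,\cdot)=0$), with $L = M$, and with degree bound $d\deg(\pi)$. This gives a proper closed $Z \subsetneq Y$ such that
$$\mathrm{d}_{\KK}(y) \geq h_{\K_Y + D_0'}(y) - \epsilon h_M(y) - \mathrm{O}(1)$$
for all $y \in R'\setminus Z$.

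Next we bound $h_{\K_Y+D_0'}$ from below. From $\K_{(Y,D_0)}^{\otimes m} \simeq M^{\otimes n}\otimes N$ with $N$ effective, we get $m\,h_{\K_Y+D_0'} \geq n\,h_M - \mathrm{O}(1)$ off $\operatorname{Supp} N$. Enlarging $Z$ by $\operatorname{Supp}N$, this gives
$$\mathrm{d}_{\KK}(y) \geq \Bigl(\tfrac{n}{m} - \epsilon\Bigr) h_M(y) - \mathrm{O}(1).$$

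For the reverse inequality we use the Silverman-type bound (Lemma \ref{height:function:lower:bound:discriminant:term}). Embed $Y$ by $M^{\otimes m_0}$ into $\PP^N$. Silverman's inequality says that for a point $P\in\PP^N(\overline{\KK})$ with $[\KK(P):\QQ] \leq r$,
$$\mathrm{d}_{\KK}(P) \leq (2r-2)\,h(P) + \mathrm{O}(1),$$
where the constant depends only on $r$, $N$ and $\KK$ once the discriminant normalization matches. Here $r = d\deg(\pi)[\KK:\QQ]$ and $h(P) = h_{M^{\otimes m_0}} = m_0 h_M$. Hence
$$\mathrm{d}_{\KK}(y) \leq m_0(2r-2)\,h_M(y) + \mathrm{O}(1).$$
Combining the two bounds yields
$$\Bigl(\tfrac{n}{m} - \epsilon - m_0(2r-2)\Bigr) h_M(y) \leq \mathrm{O}(1)$$
on $R'\setminus Z$. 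The hypothesis $n > m m_0(2r-2)$ lets us choose $\epsilon$ small enough that the coefficient is positive, so $h_M$ is bounded on $R'\setminus Z$. Because $M$ is ample and the degrees are bounded, Northcott gives that $R'\setminus Z$ is finite. Thus $R' \subseteq Z \cup (\text{finite set})$ is not Zariski dense, a contradiction. Pushing forward, the image $\pi(Z\cup\text{finite})$ is a proper closed substack containing $\Gamma_{(\mathcal{X},\mathcal{D}_0,S),d}$.

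The main obstacle is the discriminant comparison: making Silverman's bound precise with the paper's normalization of $\mathrm{d}_{\KK}$, i.e.\ relative to $\KK$, normalized by $[\KK(y):\KK]$, and over $\QQ$-degree $r$. The factor must come out as exactly $(2r-2)$ times $h$ for the very ample embedding. I would first prove Lemma \ref{height:function:lower:bound:discriminant:term} by bounding the different of $\KK(P)/\KK$ through discriminants of the polynomials whose roots are the coordinate ratios. Those discriminants are controlled by the Mahler measure, i.e.\ by $\exp(r\,h(P))$, which is how $2r-2$ appears. A secondary check is that integrality is preserved under pullback by $\pi$ up to $\mathrm{O}(1)$, which follows from functoriality of Weil functions.
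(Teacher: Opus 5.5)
Your argument is correct and is essentially the paper's proof. The paper lifts the bounded-degree integral points to $(D_0',S)$-integral points on $Y$ of degree at most $d\deg(\pi)$ over $\KK$ and invokes Theorem \ref{lang:conjecture:bounded:degree}, whose proof is exactly your combination of Conjecture \ref{Admissible:pairs:Vojta:Main:Conj:Disc:Bounded:Degree} with $D_1=0$, the Silverman bound of Lemma \ref{height:function:lower:bound:discriminant:term}, and Northcott; your explicit removal of $\operatorname{Supp}N$ is a detail the paper leaves implicit. One small correction to your closing remark: the paper's $\mathrm{d}_{\KK}(y)=\mathrm{d}_{\KK(y)}$ is normalized by $[\KK(y):\QQ]$, not $[\KK(y):\KK]$, and this is why the factor comes out as exactly $2r-2$ with $r=d\deg(\pi)[\KK:\QQ]$.
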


We prove Theorem \ref{DM:Bombieri-Lang} in Section \ref{Proof:DM:Bombieri-Lang}.  It is a consequence of Theorem \ref{lang:conjecture:bounded:degree}.  In Example \ref{lang:conjecture:bounded:degree:example}, we indicate that the hypothesis of Theorems \ref{DM:Bombieri-Lang} and \ref{lang:conjecture:bounded:degree} are indeed non-empty.  As a final comment, in Examples \ref{lang:conjecture:bounded:degree:example}, \ref{general:complete:intersections:example}, \ref{n:fold:Petri:example} and \ref{Extremal:curves:example} we illustrate the hypothesis of Theorem \ref{DM:Bombieri-Lang}, with emphasis on the case of nonsingular projective curves.  These examples are of some independent interest and we discuss how the conclusions drawn from Theorem \ref{DM:Bombieri-Lang}, in the form of Theorem \ref{lang:conjecture:bounded:degree}, related to the above mentioned results that pertain to the sets $\Gamma_{C,d}(\FF)$.  

\subsection*{Notation and other conventions} Throughout this article, $\KK$ denotes a number field with algebraic closure $\overline{\KK}$,  $M_{\KK}$ is its set of places, $M^0_{\KK}$ is its set of finite places and $M^{\infty}_{\KK}$ is its set of infinite places.  Our conventions about representatives $|\cdot|_v$ for places $v \in M_{\KK}$ follow those from \cite[p.~11]{Bombieri:Gubler}.   Henceforth, unless stated otherwise, we fix a finite subset $S \subset M_{\KK}$ that contains $M^{\infty}_{\KK}$.    We assume that all varieties, Deligne-Mumford stacks, divisors, line bundles and related objects are defined over $\KK$.  If $X$ is a nonsingular complete variety over $\KK$, then $\K_X$ denotes its canonical line bundle.

\subsection*{Acknowledgements} It is the author's pleasure to thank colleagues for their interest, correspondence and conversations on related topics.  Further, the author  thanks anonymous referees for their stimulating questions and thoughtful comments.  Finally, the author thanks the Natural Sciences and Engineering Research Council of Canada for their support through his grants, DGECR-2021-00218 and RGPIN-2021-03821, and also the National Science and Technology Council (Taiwan) for their support through his grants 115-2115-M-002-003-MY3 and 115-2811-M-002-043.

\section{Preliminaries}

\subsection{Discriminants}
Our conventions and notation for discriminants are as in \cite{Bombieri:Gubler}.  For example, the \emph{absolute logarithmic discriminant} $\mathrm{d}_{\KK}$ is defined to be
$$
\mathrm{d}_{\KK} := \frac{1}{[\KK:\QQ]} \log \left| D_{\KK / \QQ} \right| \text{.}
$$
Here, $D_{\KK / \QQ}$ is the \emph{discriminant} of $\KK / \QQ$.  We also put
$$
\mathrm{d}_{\KK,S} := - \frac{1}{[\KK:\QQ]} \sum\limits_{v \in M_{\KK} \setminus S} \log \left| D_{\KK_v / \QQ_p} \right|_p \text{.}
$$
So, in particular $\mathrm{d}_{\KK,M^{\infty}_{\KK}} = \mathrm{d}_{\KK}\text{.}$

Let $\FF / \KK$ be a finite extension with $\FF \subset \overline{\KK}$.  If $w$ is a finite place of $\FF$ that lies above a finite place $v$ of $\KK$, then $f_{w/v}$ denotes the \emph{residue degree} and $e_{w/v}$ denotes the \emph{ramification index}. With this notation
$e_{w/v} f_{w/v} = [\FF_w : \KK_v] \text{.}$

For later use, we recall that
$$
0 \leq \mathrm{d}_{\FF} - \mathrm{d}_{\KK} = - \sum_{v \in M^0_{\KK}} \sum_{w \mid v} \frac{[\FF_w : \KK_v]}{[\FF : \KK]}(e_{w/v} - 1 + \delta_w) \log | \pi_v |_v \text{.}
$$
(See \cite[Theorem B.2.12]{Bombieri:Gubler} and compare with \cite[Equation (14.7)]{Bombieri:Gubler}.)
Here $\delta_w \in [1, e_{w/v} v(e_{w/v})]$ if $v(e_{w/v}) \geq 1$ and zero otherwise. 
Further, setting
\begin{equation}\label{rel:discrim:eqn:3}
\mathrm{d}_{\FF,S} := - \frac{1}{[\FF:\QQ]} \sum_{v \in M_{\KK} \setminus S} \sum_{w \mid v} \log | D_{\KK_v / \QQ_p}|_p
\end{equation}
it then follows that 
\begin{equation}\label{rel:discrim:eqn:4}
\mathrm{d}_{\FF} - \mathrm{d}_{\KK} = \mathrm{d}_{\FF,S} - \mathrm{d}_{\KK,S} + \mathrm{O}(1)
\end{equation}
and
\begin{equation}\label{disc:diff:eqn:1}
\mathrm{d}_{\FF,S} - \mathrm{d}_{\KK,S} = - \sum_{v \in M^0_{\KK} \setminus S} \sum_{w \mid v} \frac{[\FF_w:\KK_v]}{[\FF:\KK]}(e_{w/v} - 1 + \delta_w) \log |\pi_v|_v \text{.}
\end{equation}

\subsection{Height, proximity and counting functions}\label{height:prox:count}

Our conventions follow \cite[Chapter 2]{Bombieri:Gubler} closely.  
Let $X$ be a geometrically irreducible projective variety over $\KK$.  If $x \in X(\overline{\KK}) \text{,}$ 
then let $\KK(x)$ be its field of definition and define its discriminant to be 
$\mathrm{d}_{\KK}(x) = \mathrm{d}_{\KK(x)} \text{.}$  Let $L = \Osh_X(D)$ be a line bundle on $X$ 
defined over $\KK$, with corresponding Cartier divisor $D$.  Fix a \emph{presentation} for $D$ to get a local Weil function $\lambda_D(\cdot,v)$ for each $v \in M_{\KK}$, and defined over $\KK$.   
Define the \emph{counting function} 
$$
n_{S}(D,x) := \sum_{w \mid v \in M_{\KK} \setminus S} \lambda_D(x,w) \text{ for $x \in X \setminus \operatorname{Supp}(D)$}
$$
and the \emph{proximity function}
$$
m_{S}(D,x) := \sum_{w \mid v \in S} \lambda_D(x,w) \text{ for $x \in X \setminus \operatorname{Supp}(D)$.}
$$
Denoting by $h_L(\cdot)$ the \emph{absolute logarithmic height} of $L$, \cite[Section 2.3.3]{Bombieri:Gubler}, then
\begin{equation}\label{height:proximity:counting:eqn}
h_L(x) = m_{S}(D,x) + n_{S}(D,x) + \mathrm{O}(1) \text{ for $x  \in X \setminus \operatorname{Supp}(D)$.}
\end{equation}
   
In order to define a truncated counting function, we first put 
$$
\chi(t) := \begin{cases}
0 & \text{ if $t \leq 0$} \\
1 & \text{ if $t > 0$.}
\end{cases}
$$
Then the \emph{truncated counting function} is defined to be
\begin{equation}\label{truncated:counting:function:defn}
n^{(1)}_{S}(D,x) := \sum_{w | v \in M_{\KK} \setminus S} \chi(\lambda_{D}(x,w)) \log \left| \frac{1}{\pi_w} \right|_w \text{.}
\end{equation}
(Compare with \cite[Definition 14.2.2]{Bombieri:Gubler}, \cite{Vojta:1998} and \cite{Abramovich:VarillyAlvarado:Pera:2017}.)
In \eqref{truncated:counting:function:defn}, $\pi_w$ is a local parameter of $w$.   

Observe that here we require that the divisor $D$ is defined over $\KK$.  This is a less general context than considered, for instance, in \cite{Grieve:points:bounded:degree}, \cite{Grieve:qualitative:subspace}, \cite{Grieve:Noytaptim:coord:size} or \cite{Grieve:Effective:Local:Weil}.  However, the context that we pursue here is well suited for the setting of integral points with respect to a normal crossings divisor and finite set of places.  Indeed, the reason is that it allows for the interplay amongst the height function, the proximity and counting functions along the lines of the relation \eqref{height:proximity:counting:eqn}.  

In this sense the techniques employed in the present article are limited in so far as that they do not allow, for example as in \cite{Grieve:points:bounded:degree}, \cite{Grieve:qualitative:subspace}, \cite{Grieve:Noytaptim:coord:size} or \cite{Grieve:Effective:Local:Weil}, for the case of proximity and counting functions for divisors which are defined over a finite extension of the base number field and with respect to finite sets of places of the given base number field.  Finally, for the case of complete varieties over $\KK$, height, proximity and counting functions for Cartier divisors can be defined via this framework of presentations, via pullback, as an application of Chow's lemma.

\subsection{Logarithmic pairs}

Our conventions are similar to those of \cite{Kollar:Mori:1998} and others.   Let $X$ be an $n$-dimensional variety over $\KK$.  A  \emph{strict normal crossings divisor} on $X$, and defined over $\KK$, is an effective Cartier divisor $D \subset X\text{,}$ 
defined over $\KK$, which is such that for all $x \in D$, the local ring $\Osh_{X,x}$ is regular and there exists a  regular  system of parameters $x_1,\dots,x_n \in \mathfrak{m}_x$ together with an integer $1 \leq r \leq n$ which are such that $D$ is cut out by $x_1,\dots,x_r$ in $\Osh_{X,x}$.
A \emph{normal crossings divisor} on $X$, and defined over $\KK$, is an effective Cartier divisor $D \subset X$ 
which is defined over $\KK$ and which is such that for all $x \in D\text{,}$ 
there exists an \'etale morphism $U \rightarrow X$ 
with $x$ in its image and  with the property that $D \times_X U \subset U$ is a strict normal crossings divisor.
Finally, we recall the concept of \emph{logarithmic resolution of singularities}.  To this end, let $X$ be a normal $\KK$-variety and let $D$ be a $\QQ$-Weil divisor on $X$.  As in \cite[p. 5]{Kollar:Mori:1998}, by a \emph{log resolution} of the pair $(X,D)$ is meant a proper birational morphism $g \colon Y \rightarrow X$ 
from a nonsingular $\KK$-variety $Y$ which is such  (i)  the exceptional set $\operatorname{Ex}(g)$ is a divisor; and (ii) 
$\operatorname{Ex}(g) \bigcup g^{-1}(\operatorname{Supp} D)$ is a strict normal crossings divisor on $Y$.  
That log resolutions exist follows as a a consequence of Hironaka's Theorem about resolutions of singularities \cite[Theorem 0.2]{Kollar:Mori:1998}.

\section{Ramified covers and inequalities for truncated counting functions}

In this section, we adapt from \cite[Section 3]{Vojta:1998} various points which are needed to establish that Conjecture \ref{Admissible:pairs:Vojta:Main:Conj:Disc:Bounded:Degree} implies Conjecture \ref{truncated:Admissible:pairs:Vojta:Main:Conj:Disc:Bounded:Degree}.  For ease of reference and also since our approach to heights, local Weil, counting and proximity functions, as described in Section \ref{height:prox:count}, are different compared to those from \cite{Vojta:1998}, we state explicitly and prove the results that we require here.  In doing so, we indicate clearly the manner in which they resemble the approach from \cite{Vojta:1998}.

To begin with, in Lemma \ref{lemma:3:2}, we state \cite[Lemma 3.2]{Vojta:1998}.  It describes the manner in which log canonical divisors behave under pullback via generically finite morphisms. 

\begin{lemma}
\label{lemma:3:2}
Working over $\KK$, suppose that $\pi \colon X' \rightarrow X$ 
is a generically finite morphism of nonsingular varieties.  Respectively, let $D$ and $D'$ be normal crossings divisors on $X$ and $X'$ which are such that $\operatorname{Supp} D' = (\pi^* D)_{\mathrm{red}} \text{.}$ Then the line bundle $\K_{X'} \otimes \Osh_{X'}(D') \otimes \pi^*(\K_X^{\vee} \otimes \Osh_X(-D))$ is effective and admits a nonzero global section that 
that vanishes only on $\operatorname{Supp}(D') \bigcup \operatorname{Ram}(\pi)$.
\end{lemma}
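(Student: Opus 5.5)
The plan is to compare logarithmic differential forms on $X$ and $X'$ and reduce everything to a local computation in coordinates. Write $n=\dim X=\dim X'$; dimensions agree because $\pi$ is generically finite and $X'$ is irreducible. We have the sheaves of logarithmic $1$-forms $\Omega^1_X(\log D)$ and $\Omega^1_{X'}(\log D')$, locally free of rank $n$ because $D$ and $D'$ are normal crossings. Their top exterior powers are $\K_X\otimes\Osh_X(D)$ and $\K_{X'}\otimes\Osh_{X'}(D')$. The first step is to check that pullback of forms gives a morphism of sheaves
$$
\pi^*\Omega^1_X(\log D)\longrightarrow \Omega^1_{X'}(\log D').
$$
To see this, work étale-locally near a point of $X'$. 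Take a local equation $x_1\cdots x_r$ for $D$ in coordinates $x_1,\dots,x_n$ on $X$. Since $\operatorname{Supp}D'=(\pi^*D)_{\mathrm{red}}$, each $\pi^*x_i$ for $i\leq r$ is a unit times a monomial $\prod_j y_j^{a_{ij}}$ in local coordinates $y_1,\dots,y_n$ on $X'$ in which $D'$ is $y_1\cdots y_s=0$. Hence
$$
\pi^*(dx_i/x_i)=\sum_j a_{ij}\,dy_j/y_j+du/u,
$$
where $u$ is the unit. This is a logarithmic form along $D'$. For $i>r$, the form $d(\pi^*x_i)$ is regular. The étale-local normal crossings situation descends, since logarithmic forms are defined étale-locally.

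Next, take the $n$-th exterior power. This gives a map $\pi^*(\K_X\otimes\Osh_X(D))\to\K_{X'}\otimes\Osh_{X'}(D')$, equivalently a global section $\sigma$ of
$$
\K_{X'}\otimes\Osh_{X'}(D')\otimes\pi^*(\K_X^{\vee}\otimes\Osh_X(-D)).
$$
The section $\sigma$ is nonzero because $\pi$ is generically finite. Over a dense open set where $\pi$ is étale and which avoids $\operatorname{Supp}D'$ (characteristic zero), the log forms agree with ordinary forms, and $\pi^*$ on $\Omega^1$ is an isomorphism there. So $\sigma$ is a nonzero section of a line bundle, and the line bundle is effective.

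The remaining point, and the main obstacle, is to locate the zeros of $\sigma$. Suppose $p\in X'$ lies outside $\operatorname{Supp}D'\cup\operatorname{Ram}(\pi)$. Then $\pi(p)\notin\operatorname{Supp}D$, because $\operatorname{Supp}\pi^*D=\operatorname{Supp}D'$. Near $p$ both log sheaves are the ordinary cotangent sheaves, and $\sigma$ is the Jacobian determinant of $\pi$. This is a unit because $p\notin\operatorname{Ram}(\pi)$, the ramification divisor being the divisor of that Jacobian determinant. Hence $\sigma(p)\neq0$, and the zero locus of $\sigma$ lies in $\operatorname{Supp}D'\cup\operatorname{Ram}(\pi)$.

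Two delicate points remain. First, one must justify that $\operatorname{Ram}(\pi)$ is understood as the zero divisor of $\det d\pi$, possibly including divisorial components where $\pi$ contracts. Second, one must check the étale-local monomial description when $D'$ is only normal crossings rather than strict normal crossings. I would handle this by passing to an étale neighbourhood where both divisors are strict normal crossings, noting that the construction is compatible with étale base change.
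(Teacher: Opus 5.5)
Your proposal is correct: taking the determinant of $\pi^*\Omega^1_X(\log D)\to\Omega^1_{X'}(\log D')$ is the standard argument. The paper does not argue this itself but only cites Vojta's Lemma 3.2, whose proof is essentially this same log-differential construction, so your route matches the intended one.
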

\begin{proof}
This is \cite[Lemma 3.2]{Vojta:1998}.  
\end{proof}

Next, we establish a reduction step that is similar to \cite[p. 1112]{Vojta:1998}.

\begin{lemma}
\label{Vojta:Chow}
In proving that Conjecture \ref{Admissible:pairs:Vojta:Main:Conj:Disc:Bounded:Degree} implies Conjecture \ref{truncated:Admissible:pairs:Vojta:Main:Conj:Disc:Bounded:Degree}, there is no loss in generality by assuming that $X$ is a nonsingular projective variety.
\end{lemma}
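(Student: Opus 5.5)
Assume the conclusion of Conjecture \ref{Admissible:pairs:Vojta:Main:Conj:Disc:Bounded:Degree} holds for all nonsingular projective varieties. Let $X$ be a nonsingular complete variety with data $D_0$, $D_1$, $L$, $d$, $\epsilon$ as in Conjecture \ref{truncated:Admissible:pairs:Vojta:Main:Conj:Disc:Bounded:Degree}. The plan is to pass to a projective model without losing the normal crossings hypotheses and then transport the truncated inequality back to $X$.

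First, by Chow's lemma I choose a projective variety $X''$ and a birational morphism $X'' \to X$. I then take a log resolution of $(X'', \text{pullback of } D_0+D_1)$. This gives a nonsingular projective $X'$ and a proper birational morphism $g \colon X' \rightarrow X$ with the following property: $\operatorname{Ex}(g) \cup g^{-1}(\operatorname{Supp}(D_0+D_1))$ is a strict normal crossings divisor. Set $D_0' := (g^*D_0)_{\mathrm{red}}$ and $D_1' := (g^*D_1)_{\mathrm{red}} - $ (components already in $D_0'$), so that $D_0'+D_1'$ is normal crossings, and set $L' := g^*L$, which is still big. Because $g$ is an isomorphism over a dense open $V \subseteq X$, the exceptional locus is a proper closed subset that can be absorbed into the final $Z$.

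Next, I check that $(D_0,S)$-integral points pull back to $(D_0',S)$-integral points. For $x \in R \setminus g(\operatorname{Ex}(g))$ with lift $x'$, the difference $g^*D_0 - D_0'$ is effective and supported on $\operatorname{Supp} D_0'$. Functoriality of Weil functions gives $n_S(D_0',x') \leq n_S(g^*D_0,x') + \mathrm{O}(1) = n_S(D_0,x)+\mathrm{O}(1) = \mathrm{O}(1)$, using \eqref{height:proximity:counting:eqn} and \eqref{eqn:4}. Hence $m_S(D_0',x') = h_{D_0'}(x') + \mathrm{O}(1)$. The field of definition and the degree are unchanged, since $\KK(x') = \KK(x)$.

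Then I compare the two sides of the inequality. By Lemma \ref{lemma:3:2} applied to $g$, the divisor $\K_{X'} + D_0' + D_1' - g^*(\K_X + D_0 + D_1)$ is effective with support in $\operatorname{Supp}(D_0'+D_1') \cup \operatorname{Ex}(g)$. This gives
$$
h_{\K_{X'}+D_0'+D_1'}(x') \geq h_{\K_X+D_0+D_1}(x) - \mathrm{O}(1)
$$
off a proper closed set. On the truncated side, $\chi(\lambda_{D_1'}(x',w)) = \chi(\lambda_{D_1}(x,w))$ up to a bounded finite set of places, because the supports agree over $V$ and truncation depends only on support. Therefore $n^{(1)}_S(D_1',x') = n^{(1)}_S(D_1,x) + \mathrm{O}(1)$. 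Combining these, the truncated inequality \eqref{eqn:6} on $X'$, with $Z'$, implies it on $X$ with $Z := g(Z') \cup g(\operatorname{Ex}(g))$.

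The main obstacle is this last comparison of truncated counting functions and of the log canonical heights. The effectivity from Lemma \ref{lemma:3:2} only holds modulo sections vanishing on $\operatorname{Supp}(D') \cup \operatorname{Ex}(g)$. The height inequality is therefore not automatic, and I must use the local Weil function of that effective divisor. This function is bounded below off its support, and off the exceptional locus its only poles are along $D_0'+D_1'$. I would handle its contribution along $D_0'$ using the integrality of $x'$, and its contribution along $D_1'$ by observing that, over $V$, the discrepancy is supported on $\operatorname{Ex}(g)$ alone.
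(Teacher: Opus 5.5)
Your proposal is correct and follows the paper's route. You use Chow's lemma plus resolution to reach a nonsingular projective $X'$, then Lemma \ref{lemma:3:2} to get $h_{\K_{X'}+D_0'+D_1'}(x')\geq h_{\K_X+D_0+D_1}(x)-\mathrm{O}(1)$, and you supply details the paper leaves implicit: pullback of integrality, $\KK(x')=\KK(x)$ off $g(\operatorname{Ex}(g))$, and bigness of $g^*L$.

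Two small corrections. First, the ``obstacle'' in your last paragraph is not one. The effective divisor from Lemma \ref{lemma:3:2} is supported in $\operatorname{Supp}(D_0'+D_1')\cup\operatorname{Ex}(g)$, so its height is $\geq -\mathrm{O}(1)$ at every point you consider, with no appeal to integrality. Second, for the truncated terms you only need $n^{(1)}_S(D_1',x')\leq n^{(1)}_S(D_1,x)+\mathrm{O}(1)$, which follows from $\operatorname{Supp}D_1'\subseteq g^{-1}(\operatorname{Supp}D_1)$. Your justification that ``the supports agree over $V$'' does not control places where the reduction of $x$ falls in $g(\operatorname{Ex}(g))$. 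The reverse bound genuinely uses integrality, because exceptional components over $D_0\cap D_1$ went into $D_0'$.
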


\begin{proof}  We argue similar to the discussion given in \cite[p. 1112]{Vojta:1998}.
By Chow's Lemma and resolution of singularities there exists a smooth projective variety $X'$ and a proper birational morphism $\pi \colon X' \rightarrow X$ which is such that $D' := (\pi^* D)_{\mathrm{red}}$ 
has normal crossings.  
In fact, we can write
$D_0' + D_1' := D'$
where $D_0'$ is a normal crossings divisor on $X'$ and where $D_1'$ is an effective divisor on $X'$.
Thus, in light of Lemma \ref{lemma:3:2}, it follows that Conjecture \ref{truncated:Admissible:pairs:Vojta:Main:Conj:Disc:Bounded:Degree} applied to $(X',D_1')$ and with respect to $(D_0',S)$-integral points implies Conjecture \ref{truncated:Admissible:pairs:Vojta:Main:Conj:Disc:Bounded:Degree} for $(X,D_1)$ with respect to $(D_0,S)$-integral points.  
\end{proof}

In deducing Conjecture \ref{Admissible:pairs:Vojta:Main:Conj:Disc:Bounded:Degree} from Conjecture \ref{truncated:Admissible:pairs:Vojta:Main:Conj:Disc:Bounded:Degree}, it is helpful to replace the divisor $D_1$ by a sufficiently positive multiple.  This is the role of Lemma \ref{reduction:step:2}.

\begin{lemma}
\label{reduction:step:2}
Let $D_1$ and $D_2$ be two effective divisors on a nonsingular projective variety $X$.  Then 
$$
n^{(1)}_S(D_1+D_2,x) - n_S^{(1)}(D_1,x) \\ \leq  h_{\Osh_X(D_2)}(x) + \mathrm{O}(1) 
$$
for $x \in X \setminus \operatorname{Supp}(D_1 + D_2)$.
\end{lemma}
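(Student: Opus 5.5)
The plan is to compare the two truncated counting functions place by place and then dominate the difference by the (untruncated) counting function of $D_2$, which is in turn bounded by the height. Fix presentations of $D_1$ and $D_2$ over $\KK$, and use the presentation of $D_1+D_2$ obtained by adding them. Then, for every place $w$ of $\KK(x)$ lying over $v \in M_{\KK}\setminus S$,
$$
\lambda_{D_1+D_2}(x,w) = \lambda_{D_1}(x,w) + \lambda_{D_2}(x,w).
$$
There is a constant $c$ with $\lambda_{D_i}(x,w) \geq -c$ for all such $w$. Moreover, for all but finitely many $v$ (those where the presentations have good reduction) one may take $c=0$ at every $w \mid v$. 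The contribution of the exceptional finitely many $v$ to either truncated counting function is $\mathrm{O}(1)$: indeed $\sum_{w\mid v}\log|1/\pi_w|_w$ is bounded in terms of $\log|1/\pi_v|_v$ by the normalizations of \cite[p.~11]{Bombieri:Gubler}. So these places can be discarded at the cost of the $\mathrm{O}(1)$ term.

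For the remaining places, $\lambda_{D_1},\lambda_{D_2}\geq 0$, and I compare the indicators. We have $\chi(\lambda_{D_1+D_2}(x,w)) - \chi(\lambda_{D_1}(x,w))$ equal to $1$ only if $\lambda_{D_1}(x,w)=0$ and $\lambda_{D_2}(x,w)>0$, and otherwise it is $\leq 0$. In the first case I use integrality: at a place of good reduction, $\lambda_{D_2}(x,w)$ is a nonnegative integer multiple of $\log|1/\pi_w|_w$, namely the intersection multiplicity of $x$ with $D_2$ on the model. Hence $\lambda_{D_2}(x,w)>0$ forces $\lambda_{D_2}(x,w) \geq \log|1/\pi_w|_w$. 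This gives, place by place,
$$
\bigl(\chi(\lambda_{D_1+D_2}(x,w)) - \chi(\lambda_{D_1}(x,w))\bigr)\log|1/\pi_w|_w \leq \lambda_{D_2}(x,w).
$$
Summing over $w \mid v \notin S$ then yields $n^{(1)}_S(D_1+D_2,x)-n^{(1)}_S(D_1,x) \leq n_S(D_2,x)+\mathrm{O}(1)$.

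To finish, I apply \eqref{height:proximity:counting:eqn} to $D_2$, which gives $n_S(D_2,x) = h_{\Osh_X(D_2)}(x) - m_S(D_2,x) + \mathrm{O}(1)$. Since $D_2$ is effective, $m_S(D_2,x)$ is bounded below, so $n_S(D_2,x)\leq h_{\Osh_X(D_2)}(x)+\mathrm{O}(1)$. Here $x \notin \operatorname{Supp}(D_1+D_2)$ is needed so that all the Weil functions are finite.

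I expect the main obstacle to be the integrality step, namely that at almost all places $\lambda_{D_2}(x,w)$ lies in $\ZZ_{\geq 0}\cdot\log|1/\pi_w|_w$. With arbitrary presentations in the sense of \cite[Chapter 2]{Bombieri:Gubler}, the Weil function is only determined up to a bounded $M_{\KK}$-constant. I would handle this by choosing presentations coming from generating sections of very ample bundles with $D_2 = A - B$, and spreading $X$ and these sections out over $\Osh_{\KK,S'}$ for some finite $S'\supseteq S$. At places outside $S'$, the local Weil function is then $\log$ of the $w$-adic size of a ratio of integral section values, hence an integral multiple of $\log|1/\pi_w|_w$. Any discrepancy coming from the places in $S'$ is absorbed into $\mathrm{O}(1)$.
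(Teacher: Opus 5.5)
Your argument is correct and follows essentially the same route as the paper, which simply writes the chain $n^{(1)}_S(D_1+D_2,x)-n^{(1)}_S(D_1,x)\leq n^{(1)}_S(D_2,x)\leq n_S(D_2,x)\leq h_{\Osh_X(D_2)}(x)+\mathrm{O}(1)$. Your place-by-place comparison of indicators is exactly the first inequality. Your integrality argument at places of good reduction, with the finitely many bad places absorbed into $\mathrm{O}(1)$, supplies the justification for the second inequality, which the paper leaves implicit.
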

 
\begin{proof}  We argue as in \cite[pp. 1112--1113]{Vojta:1998}.
Using the definition of the truncated counting function, \eqref{truncated:counting:function:defn}, it follows that 
\begin{multline*}
n^{(1)}_S(D_1+D_2,x) - n_S^{(1)}(D_1,x)  \leq n_S^{(1)}(D_2,x) \\
 \leq n_S(D_2,x) 
 \leq h_{\Osh_X(D_2)}(x) + \mathrm{O}(1) \text{.}
\end{multline*}
Thus, \eqref{eqn:6} with $D_1$ replaced by $D_1 + D_2$ implies \eqref{eqn:6} applied to $D_1$.
\end{proof}

Lemma \ref{reduction:step:2} allows for the following reduction step.

\begin{lemma}
\label{reduction:step:2:prime} 
Let $X$ be a nonsingular projective variety and let $D_0$ be a normal crossings divisor on $X$.  Suppose that $D_1$ and $D_2$ are effective divisors on $X$ which are such that $D_0+D_1$ and $D_0 + D_1 + D_2$ have normal crossings.  Then the conclusion of Conjecture \ref{truncated:Admissible:pairs:Vojta:Main:Conj:Disc:Bounded:Degree} for $D_1+D_2$ implies that the conclusion of Conjecture \ref{truncated:Admissible:pairs:Vojta:Main:Conj:Disc:Bounded:Degree} also holds for $D_1$.  
\end{lemma}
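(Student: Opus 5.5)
The plan is to combine the conclusion of Conjecture \ref{truncated:Admissible:pairs:Vojta:Main:Conj:Disc:Bounded:Degree} for the divisor $D_1+D_2$ with the comparison inequality of Lemma \ref{reduction:step:2}. Fix $d \in \ZZ_{>0}$, $\epsilon>0$ and a big line bundle $L$ on $X$. Since $D_0+D_1+D_2$ has normal crossings, the hypotheses of Conjecture \ref{truncated:Admissible:pairs:Vojta:Main:Conj:Disc:Bounded:Degree} hold for the triple $(X, D_0, D_1+D_2)$. We therefore assume that its conclusion gives a proper Zariski closed subset $Z_2 \subsetneq X$ with the following property: for every set $R$ of $(D_0,S)$-integral points and every $x \in R\setminus Z_2$ with $[\KK(x):\KK]\le d$,
$$
n_S^{(1)}(D_1+D_2,x) + \mathrm{d}_{\KK}(x) \geq h_{\K_X+D_0+D_1+D_2}(x) - \epsilon h_L(x) - \mathrm{O}(1).
$$

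Next, we take $Z := Z_2 \cup \operatorname{Supp}(D_1+D_2)$. This set is still a proper closed subset, because $D_1+D_2$ is a divisor. For $x \in R\setminus Z$, the counting functions are therefore defined and Lemma \ref{reduction:step:2} applies:
$$
n_S^{(1)}(D_1,x) \geq n_S^{(1)}(D_1+D_2,x) - h_{\Osh_X(D_2)}(x) - \mathrm{O}(1).
$$
Combining the two displays and using additivity of heights, $h_{\K_X+D_0+D_1+D_2} = h_{\K_X+D_0+D_1} + h_{\Osh_X(D_2)} + \mathrm{O}(1)$ by functoriality of the Weil height machine, the $h_{\Osh_X(D_2)}$ terms cancel. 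This leaves
$$
n_S^{(1)}(D_1,x) + \mathrm{d}_{\KK}(x) \geq h_{\K_X+D_0+D_1}(x) - \epsilon h_L(x) - \mathrm{O}(1),
$$
which is \eqref{eqn:6} for $D_1$ on $R\setminus Z$.

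The main point to check is that enlarging $Z_2$ by $\operatorname{Supp}(D_1+D_2)$ costs nothing. In the conjecture, the counting functions are only considered off the support of the divisor in question, so removing this support is harmless. A second check is that the $\mathrm{O}(1)$ constants are allowed to depend on the given set $R$ but not on $x$, so they combine without issue. No use of the normal crossings hypothesis on $D_0+D_1$ is needed beyond ensuring that the target statement is meaningful.
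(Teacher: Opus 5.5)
Your proposal is correct and follows the paper's argument: you apply Lemma \ref{reduction:step:2} to the conclusion of Conjecture \ref{truncated:Admissible:pairs:Vojta:Main:Conj:Disc:Bounded:Degree} for $D_1+D_2$. You also make explicit two steps the paper leaves implicit: the additivity $h_{\K_X+D_0+D_1+D_2}=h_{\K_X+D_0+D_1}+h_{\Osh_X(D_2)}+\mathrm{O}(1)$, and the harmless enlargement of the exceptional set by $\operatorname{Supp}(D_1+D_2)$.
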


\begin{proof}  The proof follows by adapting the discussion of \cite[p.~1113]{Vojta:1998}.
Applying Lemma \ref{reduction:step:2}, we deduce that
$$n^{(1)}_S(D_1+D_2,x) - n_S^{(1)}(D_1,x) \leq  h_{\Osh_X(D_2)}(x) + \mathrm{O}(1)$$
for points $x \in X \setminus \operatorname{Supp}(D_1+D_2)\text{.}$
Thus, Conjecture \ref{truncated:Admissible:pairs:Vojta:Main:Conj:Disc:Bounded:Degree} for $D_1+D_2$ implies that the conclusion of Conjecture \ref{truncated:Admissible:pairs:Vojta:Main:Conj:Disc:Bounded:Degree} also holds for $D_1$.  
\end{proof}

In deducing Conjecture \ref{truncated:Admissible:pairs:Vojta:Main:Conj:Disc:Bounded:Degree} from Conjecture \ref{Admissible:pairs:Vojta:Main:Conj:Disc:Bounded:Degree}, there is no loss in generality by assuming that $D_1$ is very ample.  Indeed, this is a consequence of Lemma \ref{reduction:step:2:prime} which is used to establish Lemma \ref{reduction:step:3} below.

\begin{lemma}
\label{reduction:step:3}
In order to prove that Conjecture \ref{Admissible:pairs:Vojta:Main:Conj:Disc:Bounded:Degree} implies Conjecture \ref{truncated:Admissible:pairs:Vojta:Main:Conj:Disc:Bounded:Degree}, there is no loss in generality by assuming that $X$ is a nonsingular projective variety and  that $D_1$ is a very ample divisor on $X$ and having the property that $D_0+D_1$ is a normal crossings divisor. 
\end{lemma}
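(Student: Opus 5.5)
We reduce in two stages: first to the projective case, then to the case where $D_1$ is replaced by $D_1+D_2$ with $D_1+D_2$ very ample. \textbf{Step 1 (projectivity).} By Lemma \ref{Vojta:Chow}, we may assume from the outset that $X$ is a nonsingular projective variety. If $D_1$ is already very ample with $D_0+D_1$ normal crossings, there is nothing left to do.

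\textbf{Step 2 (construction of $D_2$).} Fix a very ample divisor $A$ on $X$, defined over $\KK$. For $m \gg 0$, the divisor $mA - D_1$ is very ample; indeed, $mA - D_1$ is globally generated for $m$ large and adding an ample class preserves very ampleness. Apply Bertini's theorem over the infinite field $\KK$ to the complete linear system $|mA - D_1|$. This gives a nonsingular member $D_2 \in |mA - D_1|$, defined over $\KK$, which meets every stratum of the normal crossings divisor $D_0 + D_1$ transversally. Concretely, one argues stratum by stratum, \'etale locally, using that a general hyperplane section of each (finitely many) stratum is smooth of the expected dimension. Then $D_0 + D_1 + D_2$ is a normal crossings divisor and $D_1 + D_2 \sim mA$ is very ample. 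We may also replace $D_1+D_2$ by a general member $H$ of $|mA|$ transverse to $D_0$. However, what Lemma \ref{reduction:step:2:prime} needs is the actual sum $D_1+D_2$ as a divisor, so we keep $D_1+D_2$ itself, which is very ample as a divisor class.

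\textbf{Step 3 (reduction).} Lemma \ref{reduction:step:2:prime} now applies, since $D_0+D_1$ and $D_0+D_1+D_2$ both have normal crossings. It gives that the conclusion of Conjecture \ref{truncated:Admissible:pairs:Vojta:Main:Conj:Disc:Bounded:Degree} for $D_1+D_2$ implies it for $D_1$. The heights involved change only by $h_{\Osh_X(D_2)}$, which Lemma \ref{reduction:step:2} absorbs: the right-hand side for $D_1+D_2$ contains $h_{\K_X+D_0+D_1+D_2}$, which exceeds $h_{\K_X+D_0+D_1}$ by exactly $h_{\Osh_X(D_2)}+\mathrm{O}(1)$. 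The exceptional set may be taken to be the same $Z$. The hypothesis of Conjecture \ref{Admissible:pairs:Vojta:Main:Conj:Disc:Bounded:Degree} is used only for the new, very ample divisor $D_1' := D_1 + D_2$, with the same $D_0$, $L$, $d$, $\epsilon$ and integral point sets $R$. Hence it suffices to prove the implication for very ample $D_1$ with $D_0+D_1$ normal crossings.

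\textbf{Main obstacle.} The only delicate point is the Bertini step. We must ensure that $D_2$ can be chosen over $\KK$, not just over $\overline{\KK}$, and that it is transverse to all strata of $D_0+D_1$. The strata issue matters because $D_0$ is only assumed to have normal crossings, not strict normal crossings. For this we use that $\KK$ is infinite, so a dense open set of the parameter space of $|mA-D_1|$ has $\KK$-points. We also use that transversality with the finitely many strata, checked on an \'etale cover where the divisor becomes strict normal crossings, is an open dense condition.
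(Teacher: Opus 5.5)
Your proposal is correct and follows essentially the paper's route. You add a general $D_2$ so that $D_1+D_2$ is very ample and $D_0+D_1+D_2$ still has normal crossings, and then apply Lemma~\ref{reduction:step:2:prime}. Your choice $D_2\in|mA-D_1|$ makes the very ampleness of $D_1+D_2\sim mA$ automatic, where the paper instead takes $D_2$ sufficiently very ample. Your requirement that $D_2$ be transverse to all strata of $D_0+D_1$, rather than only to $D_1$ as the paper states, is exactly what that lemma needs.

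One small fix: justify the very ampleness of $mA-D_1$ by writing it as $((m-1)A-D_1)+A$ and using that globally generated plus very ample is very ample. As literally stated, ``adding an ample class preserves very ampleness'' is false in general.
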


\begin{proof}  We argue similar to \cite[p.~1113]{Vojta:1998}.
Choose a sufficiently very ample divisor $D_2$ that is transverse to $D_1$.  Then upon applying Lemma \ref{reduction:step:2:prime} to $D_1+D_2$, we may replace $D_1$ by the very ample divisor $D_1 + D_2$ to obtain the conclusion that is desired by Lemma  \ref{reduction:step:2:prime}.
\end{proof}

The ramified cover that gets used to deduce Conjecture \ref{truncated:Admissible:pairs:Vojta:Main:Conj:Disc:Bounded:Degree} from Conjecture \ref{Admissible:pairs:Vojta:Main:Conj:Disc:Bounded:Degree} is constructed in Lemma \ref{lemma:3:4}.  This lemma combines \cite[Lemmas 3.3 and 3.4]{Vojta:1998}.

\begin{lemma}
\label{lemma:3:4}
Let $A$ be a very ample normal crossings divisor on a nonsingular $n$-dimensional projective variety $X$.  Then, there exists a normal crossings divisor $D^*$ on $X$ which is such that
\begin{enumerate}
\item[(i)]{$D^* - A \geq 0$;}
\item[(ii)]{for all $e \in \ZZ_{>0}$, there exists a smooth projective variety $X'$ and a proper generically finite morphism $\pi \colon X' \rightarrow X$ which is such that $(\pi^* D^*)_{\mathrm{red}}$ has normal crossings and all components of $\pi^* A$ have multiplicity $\geq e$ or zero;}
\item[(iii)]{for all $e \in \ZZ_{>0}$, the morphism $\pi \colon X' \rightarrow X$ in (ii) is unramified outside of $\pi^{-1}(\operatorname{Supp} D^*)$;}
\item[(iv)]{for all $e \in \ZZ_{>0}$, the function field of the variety $X'$, defined in (ii), is described as $\KK(X') = \KK(X)(\sqrt[e]{f_{1}},\dots,\sqrt[e]{f_{n}})$ for some $f_{1},\dots,f_{n} \in \KK(X)^{\times}$; and}
\item[(v)]{for all $e \in \ZZ_{>0}$, the divisor $\pi^*(D^*+ A)_{\mathrm{red}}$, for $\pi$ the morphism defined in (ii), is a normal crossings divisor on $X'$.}
\end{enumerate}
\end{lemma}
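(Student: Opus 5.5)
This follows Vojta's construction in \cite[Lemmas 3.3 and 3.4]{Vojta:1998}: take roots of sections of $\Osh_X(A)$ and resolve.

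\textbf{Construction of $D^*$.} Since $A$ is very ample, choose $n+1$ sections $s_0,\dots,s_n \in H^0(X,\Osh_X(A))$ defined over $\KK$. Let $A_i := \operatorname{div}(s_i)$, and arrange $A_0 = A$ (after enlarging $A$ within its linear system if needed). By Bertini we may take the $s_i$ general, so that three things hold. First, $A_0+\dots+A_n$ is a normal crossings divisor. Second, $A_0\cap\dots\cap A_n=\emptyset$, so $\{A_i\}$ is base point free. Third, every stratum of $A$ meets the other $A_i$ transversally. Put $D^* := A_0+A_1+\dots+A_n$. Then (i) is immediate, since $D^*-A=A_1+\dots+A_n\geq 0$.

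\textbf{Construction of $X'$.} Set $f_i := s_i/s_0 \in \KK(X)^\times$ for $i=1,\dots,n$. Let $X''$ be the normalization of $X$ in $\KK(X)(\sqrt[e]{f_1},\dots,\sqrt[e]{f_n})$. This is the pullback along the morphism $X\to\PP^n$ given by $[s_0:\dots:s_n]$ of the Fermat-type cover $[x_0:\dots:x_n]\mapsto[x_0^e:\dots:x_n^e]$. Locally the Kummer extension only ramifies along the divisors where some $f_i$ has a zero or pole, that is, along the $A_i$. Hence $X''\to X$ is unramified outside $\operatorname{Supp} D^*$, which gives (iii) before resolving. The function field statement (iv) holds by construction.

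Because the $A_i$ are normal crossings and base point free, $X''$ is étale-locally a product of the covers $t\mapsto t^e$ in the local coordinates cutting out the $A_i$ through a point. At a point lying on at most $n$ of the $A_i$, some $s_j$ is nonvanishing. Locally the extension is then generated by $e$-th roots of local equations of the remaining $A_i$ times units, and adjoining $e$-th roots of units is étale. So $X''$ is already nonsingular, and the preimage of $D^*$ is normal crossings with every component of $\pi^*A_i$ of multiplicity exactly $e$. If this local smoothness fails somewhere, for instance because of non-reducedness in positive characteristic (irrelevant here) or over non-closed fields, take a log resolution of $(X'',\pi^{-1}D^*)$ which is an isomorphism over the smooth locus. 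The new exceptional components lie over $\operatorname{Supp} D^*$, so (iii) is preserved. Their multiplicity in $\pi^*A$ is either zero or at least $e$, since $\pi^*A=e\cdot(\text{Cartier divisor})$ on $X''$ locally where $s_0$ is the vanishing section. This gives (ii).

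\textbf{Property (v) and the main obstacle.} Since $A\leq D^*$, we have $\operatorname{Supp}\pi^*(D^*+A)=\operatorname{Supp}\pi^*D^*$. So (v) reduces to (ii), once the reduced preimage of $D^*$ is known to be normal crossings.

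The main obstacle is the local analysis of the Kummer cover near the points where several $A_i$ meet. One must check that it is étale-locally the standard cover $(t_1,\dots,t_k)\mapsto(t_1^e,\dots,t_k^e)$, which requires the units appearing in the local expressions of the $f_i$ to have $e$-th roots étale locally. This holds in characteristic zero. The remaining subtlety is that everything, including the Bertini choices, must be defined over $\KK$. This is fine because $\KK$ is infinite, so general members of the linear system exist over $\KK$.
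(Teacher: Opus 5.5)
Your construction is the paper's (Vojta's): $D^* = A + A_1 + \cdots + A_n$ with $A_i \in |A|$ general and $\bigcap_i A_i = \emptyset$, then the Kummer extension by $e$-th roots of ratios of sections, then normalize and resolve. Your $f_i = s_i/s_0$ is the inverse of the paper's, so the field is the same. No \emph{enlarging} of $A$ is needed: $A = \operatorname{div}(s_0)$ already, and only $s_1,\dots,s_n$ are chosen generally.

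However, your claim that $X''$ is already nonsingular is false, and not for the reasons you list. Your local model needs every $A_i$ through a point to be cut out by a single coordinate. This holds for the general members $A_1,\dots,A_n$. It fails for $A_0 = A$, which is fixed and only normal crossings; after Lemma \ref{reduction:step:3} it is $D_1+D_2$, hence reducible. For example, let $A$ be two lines in $\PP^2$ and $p$ their intersection point. Then $p \notin A_1 \cup A_2$, and $s_0/s_1 = xy\cdot u$ with $u$ a unit at $p$. So $X''$ is \'etale locally $t^e = xy$, which is singular at the origin for $e \geq 2$. The resolution step is therefore mandatory, not a contingency. Your \emph{multiplicity exactly $e$} must also become \emph{multiplicity divisible by $e$}.

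Your fallback does close the argument.
\begin{enumerate}
\item[(a)] $X''$ is \'etale over $X \setminus \operatorname{Supp} D^*$, so a log resolution that is an isomorphism there keeps (iii).
\item[(b)] On $\pi^{-1}\{s_j \neq 0\}$ the function $(\sqrt[e]{f_j})^{-1}$ is regular, being integral with $e$-th power $s_0/s_j$.
\item[(c)] These functions differ by units on overlaps, so they define an effective Cartier divisor $B$ on $X''$ with $\pi^*A = eB$.
\item[(d)] This identity survives pullback to any resolution, which gives (ii).
\end{enumerate}

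This multiplicity argument is where you genuinely differ from the paper. The paper resolves each single-root cover $X_i$ separately and takes $X'$ dominating all of them. It then argues component by component that each component of $\pi^*D^*$ maps into a high-multiplicity component on some $X_i$. Your global $e$-th root of $\pi^*s_0$ is the Bloch--Gieseker/Kawamata covering-trick observation. It avoids the intermediate covers and gives divisibility by $e$ rather than just $\geq e$. Together with your local analysis, it also shows that resolution is needed only over the locus where $A$ has at least two local branches.
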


\begin{proof}  We combine the approach of \cite[Lemmas 3.3 and 3.4]{Vojta:1998}.
By Bertini's theorem, there are effective divisors $D_{1},\dots, D_{n}$ which are such that
$D_{i} \sim A$, for $j = 1,\dots,n$, and such that
$D^* := A + D_{1}+\hdots+D_{n}$ has normal crossings and
$ A \bigcap D_{1} \bigcap \hdots \bigcap D_{n} = \emptyset$.

Fix such a collection of divisors and let $e \in \ZZ_{>0}$.  For each $i=1,\dots,n$, choose $f_i \in \KK(X)^{\times}$ which is such that $\operatorname{div}(f_i) = A - D_{i}\text{.}$  
Let $\pi_i \colon X_{i} \rightarrow X$ 
be the composition of the normalization of $X$ in $\KK(X)\left(\sqrt[e]{f_{i}}\right)$ together with a desingularization of this normalization which has the property that $(\pi_{i}^* A + D_{i})_{\mathrm{red}}$ has normal crossings.

Let $\pi \colon X' \rightarrow X$ be a desingularization of the normalization of $X$ in the compositum $\KK(X') = \KK(X_{1})\cdot\hdots\cdot\KK(X_{n})$ and which is such that $(\pi^* D^*)_{\mathrm{red}}$ has normal crossings; and $X'$ dominates $X_{1},\dots,X_{n}$.  We may assume that $\pi$ is \'etale outside of $\pi^{-1}(\operatorname{Supp} D^*)$.  

Now, suppose that $E$ is a component of $\pi^* D^*$.  Then, there exists some $i$, $1 \leq i \leq n$, with $\pi(E) \subseteq A \bigcup D_{i}$; and $\pi(E) \not \subseteq A \bigcap D_{i}$.  But then since all components of $\pi^*_i(A)$ not lying over $D_i$ have multiplicity some positive multiple of $e$, it follows that the image of $E$ in $X_{i}$ is contained in a component of $\pi^{-1}(A+D_i)$ that has multiplicity of at least equal to $e$.
\end{proof}

Finally, in Lemma \ref{lemma:3:5}, we make precise the manner in which the discriminant transforms under suitable ramified covers.

\begin{lemma}
\label{lemma:3:5}
Let $D_0$ be a normal crossings divisor on a projective variety $X$.  Let $D_1$ be a very ample divisor on $X$ which has the property that $D_0+D_1$ is a normal crossings divisor on $X$.  Let $\pi \colon X' \rightarrow X$ 
be a generically finite proper morphism from a nonsingular projective variety $X'$ and assume that 
$D_1':= (\pi^* D_1)_{\mathrm{red}}$ 
has normal crossings on $X'$, that all components of $\pi^*D_1$ have multiplicity at least equal to $e$, that $\pi \colon X' \rightarrow X$ 
is unramified outside of $D' := D_0'+D_1'$ where $D'_0 := (\pi^* D_0)_{\mathrm{red}}$ 
is a normal crossings divisor on $X'$.  In this setting, assume that $S$ is such that given $d \in \ZZ_{>0}$ 
\begin{equation}\label{lemma:3:5:eqn:points}
\text{for all
$x \in X(\overline{\KK}) \setminus \operatorname{Supp}D$ 
with $[\KK(x):\KK] \leq d$ and all points $x' \in \pi^{-1}(x)$}
\end{equation} 
 it holds true that 
\begin{equation}\label{lemma:3:5:eqn:points:places:ramification:condition}
\mathrm{d}_{\KK,S}(x') - \mathrm{d}_{\KK,S}(x) \leq (e-1) n^{(1)}_S(D',x') \text{.}
\end{equation}
Then, for all such points \eqref{lemma:3:5:eqn:points} it also holds true that 
$$
\mathrm{d}_{\KK}(x') - \mathrm{d}_{\KK}(x) \leq n_S^{(1)}(D_1,x) - n_S(D'_1,x') + \frac{1}{e} h_{\Osh_X(D_1)}(x) + \mathrm{O}(1) \text{.}
$$
\end{lemma}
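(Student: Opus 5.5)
Starting from the hypothesis \eqref{lemma:3:5:eqn:points:places:ramification:condition}, we replace the $S$-discriminants by absolute discriminants via \eqref{rel:discrim:eqn:4}: for $\FF = \KK(x')$ and $\KK(x)$ both compared to $\KK$, the differences $\mathrm{d}_{\KK}(x') - \mathrm{d}_{\KK}(x)$ and $\mathrm{d}_{\KK,S}(x') - \mathrm{d}_{\KK,S}(x)$ agree up to $\mathrm{O}(1)$. The $\mathrm{O}(1)$ depends only on $S$ and on the (bounded) degrees, since $[\KK(x'):\KK] \leq d\deg(\pi)$. This gives
$$
\mathrm{d}_{\KK}(x') - \mathrm{d}_{\KK}(x) \leq (e-1)\, n_S^{(1)}(D',x') + \mathrm{O}(1).
$$
The task is then to bound $(e-1)n_S^{(1)}(D',x')$ by $n^{(1)}_S(D_1,x) - n_S(D_1',x') + \frac1e h_{\Osh_X(D_1)}(x)$. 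Here we use the truncated term in $D'$ only through its $D_1'$-part. The $D_0'$-part is handled by integrality: $x \notin \operatorname{Supp} D_0$ and $\pi$ is unramified outside $D'$, so at places outside $S$ we may assume no extra contribution there. If necessary, we will reinterpret the hypothesis with $D'$ replaced by $D_1'$, as in Vojta's setup where the ramification is along $D_1$.

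The core step is a place-by-place comparison. Fix a place $w'$ of $\KK(x')$ over $w$ of $\KK(x)$, over $v \notin S$. Since every component of $\pi^*D_1$ has multiplicity $\geq e$, we have $\lambda_{\pi^*D_1}(x',w') \geq e\, \lambda_{D_1'}(x',w') + \mathrm{O}(1)$ locally. Functoriality gives $\lambda_{\pi^*D_1}(x',w') = \lambda_{D_1}(x,w)$. Moreover, if $\lambda_{D_1'}(x',w') > 0$ then it is at least $\log|1/\pi_{w'}|_{w'}$. Writing $a := \lambda_{D_1'}(x',w')$ in units of $\log|1/\pi_{w'}|$, we want the pointwise inequality
$$
(e-1)\chi(a) + a \leq \chi(\lambda_{D_1}(x,w)) + \tfrac1e \lambda_{D_1}(x,w),
$$
suitably normalized by local degrees. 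When $a = 0$ this is trivial because the right side is nonnegative. When $a \geq 1$ we have $\lambda_{D_1}(x,w) \geq e a$, so the right side is at least $1 + a$. Note that $(e-1) + a \leq 1 + a$ fails for $e > 2$, which shows the naive grouping is wrong. Instead we must use that $(e-1)n^{(1)}$ is weighted by $1/e$ from the ramification index: the discriminant bound $e_{w'/w}-1+\delta$ is itself measured in the uniformizer of $w'$, which is $1/e_{w'/w}$ of that of $w$. So we expect the correct normalization to give $(e-1)/e \cdot \chi + a \leq \chi(\lambda_{D_1}) + \lambda_{D_1}/e$. This inequality holds since $\lambda_{D_1}/e \geq a$.

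Finally, we sum over all $w' \mid v \notin S$ with the weights $[\KK(x')_{w'}:\KK_v]/[\KK(x'):\KK]$. We use that $n_S(D_1,x)$ computed on $\KK(x)$ equals the weighted sum on $\KK(x')$, and we bound $\frac1e n_S(D_1,x) \leq \frac1e h_{\Osh_X(D_1)}(x) + \mathrm{O}(1)$ by \eqref{height:proximity:counting:eqn}, using that proximity functions are bounded below for the effective divisor $D_1$. This yields the claim.

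The main obstacle is this normalization bookkeeping: matching the scale of $(e-1)n^{(1)}_S(D',x')$ in the hypothesis, which is measured on $\KK(x')$ with its own uniformizers, against $n^{(1)}_S(D_1,x)$ on $\KK(x)$. We will first check it in the totally ramified local case and then argue that general places only help.
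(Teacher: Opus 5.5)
Your first step, passing from $\mathrm{d}_{\KK,S}$ to $\mathrm{d}_{\KK}$ at the cost of $\mathrm{O}(1)$, is fine. The core comparison, however, has a genuine gap, which your own computation already exposes, and the proposed renormalization does not repair it.

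In \eqref{lemma:3:5:eqn:points:places:ramification:condition}, the term $n^{(1)}_S(D',x')$ is already measured with $\log|1/\pi_{w'}|_{w'}=\frac{f_{w'/w}}{[\KK(x'):\KK(x)]}\log|1/\pi_w|_w$, so no further factor $1/e$ is available. Take a place $w$ where $x$ meets $D_1$, and work in units of $\log|1/\pi_w|_w$. The places above $w$ contribute $(e-1)\sum_{w'\mid w}f_{w'/w}/[\KK(x'):\KK(x)]$ to $(e-1)n^{(1)}_S(D_1',x')$. This is at most $\frac{e-1}{e}$ when every $e_{w'/w}\geq e$, but it equals $e-1$ when $w$ is unramified in $\KK(x')$. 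Multiplicity $\geq e$ in $\pi^*D_1$ does not force $e_{w'/w}\geq e$, so ``general places only help'' is backwards.

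Concretely, take $\KK=\QQ$, $X=X'=\PP^1$, $\pi(t)=t^e$, $D_1=[0]$, $D_0=[\infty]$, and $x=p^e$, $x'=p$ with $p\notin S$; this is an integral point. The hypothesis holds because both discriminant terms vanish. Yet $(e-1)n^{(1)}_S(D_1',x')=(e-1)\log p$, while $n^{(1)}_S(D_1,x)-n_S(D_1',x')+\frac{1}{e}h_{\Osh_X(D_1)}(x)=\log p$. So for $e\geq 3$ the right-hand side of the hypothesis cannot be bounded by your target: the hypothesis overestimates the discriminant exactly where $\KK(x')/\KK(x)$ is unramified. The paper's proof takes the same route as yours and asserts $n^{(1)}_S(D_1,x)-n^{(1)}_S(D_1',x')\geq(e-1)n^{(1)}_S(D_1',x')$ from the multiplicity condition. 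That inequality fails in this example too, so the paper does not supply the missing step.

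To close the gap, bypass the crude hypothesis and compute the discriminant directly. Enlarge $S$ so that residue characteristics outside $S$ exceed $\deg\pi$ and $\pi$ has good reduction there. Since $\pi$ is unramified over $X\setminus\operatorname{Supp}(D_0+D_1)$, the Chevalley--Weil argument shows that $\KK(x')/\KK(x)$ is unramified at $w\notin S$ unless $x$ meets $D_0+D_1$ modulo $w$. In that case $x'$ meets $D'$ at every $w'\mid w$, and the tame different gives the local contribution
\[
\sum_{w'\mid w}\frac{f_{w'/w}(e_{w'/w}-1)}{[\KK(x'):\KK(x)]}\log|1/\pi_w|_w=\log|1/\pi_w|_w-\sum_{w'\mid w}\log|1/\pi_{w'}|_{w'},
\]
using $\sum_{w'\mid w}e_{w'/w}f_{w'/w}=[\KK(x'):\KK(x)]$. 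Summing over places gives $\mathrm{d}_{\KK}(x')-\mathrm{d}_{\KK}(x)\leq n^{(1)}_S(D_0+D_1,x)-n^{(1)}_S(D',x')+\mathrm{O}(1)$. Then $-n^{(1)}_S(D_1',x')\leq 0\leq -n_S(D_1',x')+\frac{1}{e}n_S(D_1,x)+\mathrm{O}(1)$ finishes the $D_1$-part.

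The $D_0$-part cannot be waved away; the paper also drops it silently. The lemma does not assume integrality, and without it the conclusion is false. Use the same cover with $x=1/p$ and $x'=p^{-1/e}$: the left side is at least $\frac{e-1}{e}\log p$, while the right side is $\frac{1}{e}\log p+\mathrm{O}(1)$. You must restrict to $(D_0,S)$-integral points of bounded degree, where $n^{(1)}_S(D_0,x)=\mathrm{O}(1)$. This is how the lemma is used in the proof of Theorem \ref{Vojta:logical:implications}.
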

\begin{proof}  We adapt the proof of \cite[Lemma 3.5]{Vojta:1998} to our current context.
Using \eqref{lemma:3:5:eqn:points:places:ramification:condition} in conjunction with \eqref{rel:discrim:eqn:4} we can write
\begin{equation}\label{eqn:1}
\mathrm{d}_{\KK}(x') - \mathrm{d}_{\KK}(x) \leq (e-1) n^{(1)}_S(D'_1,x') + \mathrm{O}(1) \text{.}
\end{equation}

Now, by assumption, all components of $\pi^*D_1$ have multiplicity at least equal to $e$.  It then follows, using the definition of the truncated counting functions, that 
\begin{equation}\label{eqn:2}
n^{(1)}_S(D_1,x) - n^{(1)}_S(D'_1,x') \geq (e-1) n^{(1)}_S(D'_1,x') 
\end{equation}
and
\begin{multline}\label{eqn:3}
n_S(D'_1,x') - n^{(1)}_S(D'_1,x') \leq n_S(D'_1,x') \\ \leq \frac{1}{e} n_S(D_1,x) 
\leq \frac{1}{e} h_{\Osh_X(D_1)}(x) + \mathrm{O}(1) \text{.}
\end{multline}
Combining the inequalities \eqref{eqn:2} and \eqref{eqn:3} gives the inequality
\begin{multline}\label{eqn:4}
n^{(1)}_S(D_1,x) - n_S(D'_1,x') + \frac{1}{e} h_{\Osh_X(D_1)}(x) 
\geq  (e-1)n^{(1)}_S(D'_1,x') + \mathrm{O}(1) \text{.}
\end{multline}
Finally, together the inequalities \eqref{eqn:4} and \eqref{eqn:1} imply that 
$$
\mathrm{d}_{\KK}(x') - \mathrm{d}_{\KK}(x) \leq n^{(1)}_S(D_1,x) - n_S(D'_1,x') + \frac{1}{e} h_{\Osh_X(D_1)}(x) + \mathrm{O}(1) \text{.}
$$
\end{proof}

\begin{remark}\label{Remark:Hermite:discriminant:thm}
The condition given by \eqref{lemma:3:5:eqn:points:places:ramification:condition} can be satisfied for suitably large sets of places $S$.  Indeed, by Hermite's discriminant theorem, in the form of \cite[Corollary B.2.15]{Bombieri:Gubler},  it is possible to choose a smallest such $S$, which may a priori be larger than the given $S$, so that this condition is satisfied.  Indeed, the existence of such a smallest $S$ can be deduced using \eqref{disc:diff:eqn:1} and \cite[Corollary B.2.15]{Bombieri:Gubler}. (Here, is where we use the fact that we are working over a number field as opposed to the function field of a curve.  But, by modifying our approach here, that setting can be treated as in \cite[Lemma 3.5]{Vojta:1998}.)
\end{remark}

\section{Proof of Theorem \ref{Vojta:logical:implications}}\label{Section:proof:harder:implication}

We first check, similar to \cite[Lemma 2.3]{Levin:GCD}, that Conjecture \ref{Vojta:Main:Conj:Disc:Bounded:Degree} is logically equivalent to Conjecture \ref{Admissible:pairs:Vojta:Main:Conj:Disc:Bounded:Degree}.  

\begin{lemma}\label{Levin:lemma:2:3}
Conjecture \ref{Vojta:Main:Conj:Disc:Bounded:Degree} is logically equivalent to Conjecture \ref{Admissible:pairs:Vojta:Main:Conj:Disc:Bounded:Degree}.
\end{lemma}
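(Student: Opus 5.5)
We prove the two implications separately, following \cite[Lemma 2.3]{Levin:GCD}. The key observation is that Conjecture \ref{Admissible:pairs:Vojta:Main:Conj:Disc:Bounded:Degree} with $D_0 = 0$ is exactly Conjecture \ref{Vojta:Main:Conj:Disc:Bounded:Degree}. Indeed, when $D_0 = 0$ the integrality condition \eqref{eqn:4} is vacuous: both sides are $\mathrm{O}(1)$, so every subset of $X(\overline{\KK})$ is a set of $(0,S)$-integral points. Moreover $\K_X + D_0 + D_1 = \K_X + D_1$. So Conjecture \ref{Admissible:pairs:Vojta:Main:Conj:Disc:Bounded:Degree} implies Conjecture \ref{Vojta:Main:Conj:Disc:Bounded:Degree} by specialization. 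A small point to check is that a normal crossings $D_1$ in Conjecture \ref{Vojta:Main:Conj:Disc:Bounded:Degree} is an allowed effective divisor with $D_0+D_1 = D_1$ normal crossings; this holds.

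For the converse, fix $X$, $D_0$, $D_1$, $L$, $d$, $\epsilon$ as in Conjecture \ref{Admissible:pairs:Vojta:Main:Conj:Disc:Bounded:Degree}. We apply Conjecture \ref{Vojta:Main:Conj:Disc:Bounded:Degree} to the normal crossings divisor $D := D_0 + D_1$, with the same $L$, $d$, $\epsilon$. This produces $Z \subsetneq X$ such that
$$
n_S(D_0,x) + n_S(D_1,x) + \mathrm{d}_{\KK}(x) \geq h_{\K_X+D_0+D_1}(x) - \epsilon h_L(x) - \mathrm{O}(1)
$$
for all $x \notin Z$ of degree at most $d$ (and off $\operatorname{Supp} D$, so that the counting functions are defined). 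Here we use additivity of counting functions for the chosen presentations, $n_S(D_0+D_1,x) = n_S(D_0,x)+n_S(D_1,x) + \mathrm{O}(1)$.

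Next, let $R$ be any set of $(D_0,S)$-integral points. Combining \eqref{eqn:4} with \eqref{height:proximity:counting:eqn} gives $n_S(D_0,x) = \mathrm{O}(1)$ for $x \in R$. Substituting this into the inequality above yields \eqref{eqn:5a} for $x \in R \setminus Z$ with $[\KK(x):\KK]\le d$. Note that $Z$ is independent of $R$, as Conjecture \ref{Admissible:pairs:Vojta:Main:Conj:Disc:Bounded:Degree} demands.

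The main subtlety is the $\mathrm{O}(1)$ bookkeeping. The constant in the integrality condition \eqref{eqn:4} depends on $R$, so the implied constant in \eqref{eqn:5a} is allowed to depend on $R$; this is consistent with Levin's formulation and with how the conclusion is read. We must also confirm that points of $R$ lying on $\operatorname{Supp} D_1$ cause no harm. Such points either can be absorbed into $Z$ by enlarging $Z$ with $\operatorname{Supp} D_1$, which remains a proper closed subset, or are handled by the convention that the counting functions are only considered off the support. I expect this to be the only step needing care; everything else is formal.
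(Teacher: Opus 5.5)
Your proposal is correct and follows essentially the same route as the paper. For one direction the paper specializes to $D_0 = 0$. For the other it applies Conjecture \ref{Vojta:Main:Conj:Disc:Bounded:Degree} to $D_0 + D_1$, using additivity of counting functions and $n_S(D_0,x) = \mathrm{O}(1)$ on $(D_0,S)$-integral points. Your added remarks on the $R$-dependence of the implied constant, on $Z$ being independent of $R$, and on points lying over $\operatorname{Supp} D_1$ are valid points that the paper leaves implicit.
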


\begin{proof} 
Consider a set of $(D_0,S)$-integral points \eqref{eqn:5}.  Then, by definition 
$$
n_S(D_0,x) = \mathrm{O}(1)
$$
for all $x \in R$.  On the other hand, by linearity of the counting function, 
it follows that
$$
n_S(D_1+D_0,x) = n_S(D_1,x) + n_S(D_0,x) + \mathrm{O}(1) \text{.}
$$
Therefore the conclusion desired by Conjecture \ref{Admissible:pairs:Vojta:Main:Conj:Disc:Bounded:Degree} follows from the conclusion of Conjecture \ref{Vojta:Main:Conj:Disc:Bounded:Degree} applied to the normal crossings divisor $D_1 + D_0$.

To see that Conjecture \ref{Admissible:pairs:Vojta:Main:Conj:Disc:Bounded:Degree} implies Conjecture \ref{Vojta:Main:Conj:Disc:Bounded:Degree}, note that when $D_0 = 0$, then Conjecture \ref{Admissible:pairs:Vojta:Main:Conj:Disc:Bounded:Degree} becomes the statement of Conjecture \ref{Vojta:Main:Conj:Disc:Bounded:Degree}.
\end{proof}

Next, we check that Conjecture \ref{truncated:Admissible:pairs:Vojta:Main:Conj:Disc:Bounded:Degree} implies Conjecture \ref{Admissible:pairs:Vojta:Main:Conj:Disc:Bounded:Degree}.

\begin{lemma}\label{admissible:pairs:easy:implication}
Conjecture \ref{truncated:Admissible:pairs:Vojta:Main:Conj:Disc:Bounded:Degree} implies Conjecture \ref{Admissible:pairs:Vojta:Main:Conj:Disc:Bounded:Degree}.
\end{lemma}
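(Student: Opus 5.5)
The plan is to compare the truncated and untruncated counting functions termwise, place by place, and then absorb the loss into the error term, as in \cite[Theorem 3.1]{Vojta:1998}. Fix the data $X$, $D_0$, $D_1$, $L$, $d$ and $\epsilon$ of Conjecture \ref{Admissible:pairs:Vojta:Main:Conj:Disc:Bounded:Degree}. Apply the conclusion of Conjecture \ref{truncated:Admissible:pairs:Vojta:Main:Conj:Disc:Bounded:Degree} to exactly the same data. This produces a proper Zariski closed subset $Z \subsetneq X$ such that, for every set $R$ of $(D_0,S)$-integral points,
$$
n_S^{(1)}(D_1,x) + \mathrm{d}_{\KK}(x) \geq h_{\K_X + D_0+D_1}(x) - \epsilon h_L(x) - \mathrm{O}(1)
$$
for all $x \in R \setminus Z$ with $[\KK(x):\KK] \leq d$.

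It then suffices to prove the pointwise bound
$$
n_S^{(1)}(D_1,x) \leq n_S(D_1,x) + \mathrm{O}(1)
$$
for $x \in X(\overline{\KK}) \setminus \operatorname{Supp} D_1$. Substituting this into the inequality above gives \eqref{eqn:5a} with the same $Z$.

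To prove the bound, I fix a presentation of $D_1$ over $\KK$. Since $D_1$ is effective, I can choose it so that $\lambda_{D_1}(\cdot,w) \geq 0$ for all $w$ above the places $v \notin S$, after enlarging the constant. Indeed, local Weil functions of effective divisors are bounded below by an $M_{\KK}$-constant, which vanishes for almost all $v$, and one may enlarge $S$ harmlessly or absorb the finitely many exceptional places into $\mathrm{O}(1)$.

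The main point, and the only real obstacle, is the comparison at each place $w \mid v \notin S$ where $\lambda_{D_1}(x,w) > 0$. There I need
$$
\lambda_{D_1}(x,w) \geq \log|1/\pi_w|_w .
$$
For almost all $v$ this holds because the presentation has good reduction: $\lambda_{D_1}(x,w)$ equals $\log|1/\pi_w|_w$ times the intersection multiplicity of $x$ with $D_1$ modulo $w$. That multiplicity is a nonnegative integer, so if it is positive it is at least $1$, which is exactly the required inequality. At the finitely many bad places $v$, the difference between the two sides is bounded by an $M_{\KK}$-constant, again contributing only $\mathrm{O}(1)$.

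Summing over $w \mid v \in M_{\KK}\setminus S$ then gives the pointwise bound. Hence the conclusion of Conjecture \ref{truncated:Admissible:pairs:Vojta:Main:Conj:Disc:Bounded:Degree} yields that of Conjecture \ref{Admissible:pairs:Vojta:Main:Conj:Disc:Bounded:Degree}.
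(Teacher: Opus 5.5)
Your proposal is correct and follows the same route as the paper, whose entire proof is the observation $n_S^{(1)}(D_1,x) \leq n_S(D_1,x)$, applied to the truncated inequality with the same data and the same $Z$. Your place-by-place justification supplies detail that the paper leaves implicit: the local intersection multiplicity is an integer at places of good reduction, and the finitely many bad places contribute only an $M_{\KK}$-constant, which accounts for the harmless $\mathrm{O}(1)$.
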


\begin{proof}
Indeed, simply note that $n_S^{(1)}(D_1,x) \leq n_S(D_1,x)$.
\end{proof}

We are now in a position to prove Theorem \ref{Vojta:logical:implications}.  Our argument follows the approach of \cite[pp.~1115--1116]{Vojta:1998}.

\begin{proof}[Proof of Theorem \ref{Vojta:logical:implications}]  
In light of Lemmas \ref{Levin:lemma:2:3} and \ref{admissible:pairs:easy:implication}, it remains to establish that Conjecture \ref{Admissible:pairs:Vojta:Main:Conj:Disc:Bounded:Degree} implies Conjecture \ref{truncated:Admissible:pairs:Vojta:Main:Conj:Disc:Bounded:Degree}.  By Lemma \ref{Vojta:Chow} we may assume that $X$ is projective.

First of all, by Kodaira's lemma, after adjusting $\epsilon > 0$, if required, we may assume that $L$ is ample.  Then
$$
h_{\Osh_X(D_1)}(\cdot) \leq c h_L(\cdot) +\mathrm{O}(1)
$$
for some constant $c$ that depends only  on $X$, $D$ and $L$.  Further, by Lemma \ref{reduction:step:3}, we may and do assume that $D_1$ is a very ample normal crossings divisor.  Now, fix an integer 
$e \geq c/\epsilon$
and let
$\pi \colon X' \rightarrow X$
be a generically finite cover of $X$ as  in the above Lemma \ref{lemma:3:4}.  

Recall, that $X'$ is a projective nonsingular variety,  $D'_1 := (\pi^* D_1)_{\mathrm{red}}$ 
has normal crossings, that all components of $\pi^* D_1$ have multiplicity $\geq e$ and that  $\pi$ is unramified outside of $\pi^{-1}(\operatorname{Supp} D_1)$.  Further, if $D_0' = (\pi^* D_0)_{\mathrm{red}} \text{,}$ 
then $D_0'+D_1'$ has normal crossings.

In light of Lemma \ref{lemma:3:5}, by enlarging $S$ if required, we may assume that given $d  \in \ZZ_{>0}$, if
$x \in X(\overline{\KK}) \setminus \operatorname{Supp}(D_1)$
has 
$
[\KK(x) : \KK] \leq d \text{,}
$
then for all 
$
x' \in \pi^{-1}(x)
$
it holds true that
$$
\mathrm{d}_{\KK}(x') - \mathrm{d}_{\KK}(x) \leq   n^{(1)}_S(D_1,x) - n_S(D'_1,x') + \frac{1}{e} h_{\Osh_X(D_1)}(x) + \mathrm{O}(1) \text{.}
$$

Note that bounded degree points $x \in X(\overline{\KK}) \setminus \operatorname{Supp} (D_0+D_1)$
lift to bounded degree points $x' \in X'(\overline{\KK})$. Put  $L' = \pi^* L \text{.}$

Now, Conjecture \ref{Admissible:pairs:Vojta:Main:Conj:Disc:Bounded:Degree} gives  the inequality
\begin{equation}\label{eqn:3:7}
n_S(D'_1,x')+\mathrm{d}_{\KK}(x') \geq h_{\K_{X'}+D'_0 + D_1'}(x') - \epsilon' h_{L'}(x') - \mathrm{O}(1)
\end{equation}
provided that 
$x' \not  \in Z'$
for  
$Z'\subsetneq  X$ 
some proper Zariski closed subset of $X$.  On the other hand, recall,  that
$$
\K_{X'} + D'_0 + D'_1 - \pi^*(\K_X + D_0 + D_1) \geq  0 \text{.}
$$
So, it follows that if 
$x' \in X'(\overline{\KK}) \setminus \operatorname{Supp}(D'_0+D_1')$
and $x = \pi(x')$
then
\begin{equation}\label{eqn:star}
h_{\K_{X'}+D'_0+D_1'}(x') \geq  h_{\K_X + D_0+D_1}(x) - \mathrm{O}(1).
\end{equation}

Now, by Lemma \ref{lemma:3:5}
\begin{equation}\label{eqn:*}
n_S(D'_1,x') + \mathrm{d}_{\KK}(x') \leq  n^{(1)}_S(D_1,x) + \mathrm{d}_{\KK}(x) + \frac{1}{e} h_{\Osh(D_1)}(x) + \mathrm{O}(1)
\end{equation}
for all $x' \in  X'(\overline{\KK}) \setminus \operatorname{Supp}(D_0'+D_1')$ where $x = \pi(x')$.

Henceforth, choose $\epsilon' > 0$ so  that
$\epsilon' <  \frac{\epsilon c}{e} \text{.}$
Then, using the inequality
$$
h_{\Osh_X(D_1)}(\cdot)\leq c h_L(\cdot) + \mathrm{O}(1) 
$$
it follows that
$$
\frac{1}{e} h_{\Osh_X(D_1)}(x) + \epsilon' h_{L'}(x') \leq \epsilon h_L(x)+ \mathrm{O}(1) \text{.}
$$
This inequality can be rewritten as
\begin{equation}\label{eqn:**}
- \frac{1}{e} h_{\Osh_X(D_1)}(x) - \epsilon' h_{L'}(x') - \mathrm{O}(1) \geq - \epsilon h_L(x) \text{.}
\end{equation}

Together, \eqref{eqn:3:7} and \eqref{eqn:*} give the inequality
\begin{multline}\label{eqn:***}
h_{\K_{X'} +D_0'+D_1'}(x') - \epsilon' h_{L'}(x') - \mathrm{O}(1) \\
\leq n_S(D_1',x') + d_{\KK}(x') 
\\
\leq n_S^{(1)}(D_1,x) + \mathrm{d}_{\KK}(x) + \frac{1}{e} h_{\Osh_X(D_1)}(x) + \mathrm{O}(1) 
\end{multline}
and it follows from \eqref{eqn:***} that
\begin{multline}\label{harder:key:end:eqn1}
n_S^{(1)}(D_1,x) + \mathrm{d}_{\KK}(x) + \frac{1}{e} h_{\Osh_X(D_1)}(x) + \mathrm{O}(1)\\
 \geq h_{\K_{X'}+D_0'+D_1'}(x') - \epsilon' h_{L'}(x') - \mathrm{O}(1) \text{.}
\end{multline}

On the other hand, in light of \eqref{eqn:star} and \eqref{eqn:**}, \eqref{harder:key:end:eqn1} can be rewritten as 
$$
n_S^{(1)}(D_1,x) + \mathrm{d}_{\KK}(x) \geq h_{\KK_X + D_0+D_1}(x) - \epsilon h_L(x) - \mathrm{O}(1)
$$
for all bounded degree $(D_0,S)$-integral points $x \in X(\overline{\KK}) \setminus \operatorname{Supp}(D_0+ D_1)$
outside of some proper Zariski closed subset $Z \subsetneq X \text{.}$
\end{proof}

\section{Proof of Theorem \ref{DM:Bombieri-Lang}}\label{Proof:DM:Bombieri-Lang}

In this section, we prove Theorem \ref{DM:Bombieri-Lang}.  The main input is Lemma \ref{height:function:lower:bound:discriminant:term} which builds on \cite[Theorem 2]{Silverman:1984}.    

\begin{lemma}\label{height:function:lower:bound:discriminant:term}
Let $M$ be an ample line bundle on a projective variety $Y$ and defined over a number field $\KK$.  Let $m_0 \in \ZZ_{>0}$ be a positive integer which is such that $M^{\otimes m_0}$ is very ample.  Let $y \in Y(\overline{\KK})$ be such that $[\KK(y):\QQ] \leq r$.  Then
\begin{equation}\label{discrim:eqn:inequality}
\mathrm{d}_{\KK}(y) = \mathrm{d}_{\KK(y)} \leq (2r-2)m_0 h_M(y) + \mathrm{O}(1) \text{.}
\end{equation}
\end{lemma}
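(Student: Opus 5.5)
The plan is to reduce to projective space via the very ample bundle $M^{\otimes m_0}$ and then invoke Silverman's discriminant bound \cite[Theorem 2]{Silverman:1984}. Let $\phi \colon Y \hookrightarrow \PP^N$ be the closed embedding defined over $\KK$ by a basis of $H^0(Y, M^{\otimes m_0})$. By functoriality of heights,
$$
h_{\Osh_{\PP^N}(1)}(\phi(y)) = h_{M^{\otimes m_0}}(y) + \mathrm{O}(1) = m_0 h_M(y) + \mathrm{O}(1) \text{.}
$$
Since $\phi$ is defined over $\KK$, the fields of definition agree: $\KK(\phi(y)) = \KK(y)$. In particular $\mathrm{d}_{\KK}(y) = \mathrm{d}_{\KK(y)} = \mathrm{d}_{\KK(\phi(y))}$ and $[\KK(\phi(y)):\QQ] \leq r$. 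The statement is therefore reduced to the following claim for a point $P \in \PP^N(\overline{\QQ})$ with $[\QQ(P)\cdot \KK : \QQ] \leq r$:
$$
\mathrm{d}_{\KK(P)} \leq (2r-2) h(P) + \mathrm{O}(1) \text{.}
$$

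For this claim I would use Silverman's inequality. In the normalization of absolute logarithmic heights and of $\mathrm{d}_{\FF} = [\FF:\QQ]^{-1}\log|D_{\FF/\QQ}|$ used in \cite{Bombieri:Gubler}, it gives, for a point $P$ generating a field $\FF = \QQ(P)$ of degree $s$ over $\QQ$,
$$
h(P) \geq \frac{1}{2(s-1)} \mathrm{d}_{\FF} - c(s,N) \text{.}
$$
Equivalently, $\mathrm{d}_{\FF} \leq (2s-2) h(P) + \mathrm{O}(1)$.

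The proof of that inequality is the one to reproduce if needed. Pick a coordinate ratio, or a generic linear combination $\alpha$ of the affine coordinates with small integer coefficients (the number of choices is bounded in terms of $s$ and $N$), such that $\QQ(\alpha) = \QQ(P)$ and $h(\alpha) \leq h(P) + \mathrm{O}(1)$. Then bound $|D_{\FF/\QQ}|$ by the discriminant of the minimal polynomial of $\alpha$ times a power of its leading coefficient. Mahler's inequality bounds that quantity by $s^{s}\, \mathrm{M}(\alpha)^{2s-2}$, which yields $\log|D_{\FF}| \leq (2s-2)s\,h(\alpha) + \mathrm{O}(1)$. Dividing by $s$ gives the displayed bound.

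To pass to $\KK(P)$, use the standard estimate for a compositum: $\mathrm{d}_{\KK\FF} \leq \mathrm{d}_{\KK} + \mathrm{d}_{\FF}$, which holds because the discriminant of $\KK\FF$ divides $D_{\KK}^{[\KK\FF:\KK]} D_{\FF}^{[\KK\FF:\FF]}$. Since $\mathrm{d}_{\KK}$ is a constant, and $s \leq r$ together with $h \geq 0$ gives $(2s-2)h \leq (2r-2)h$, the claim follows, and hence \eqref{discrim:eqn:inequality}.

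The main obstacle is the normalization and bookkeeping. One must check that the constant really comes out as $2r-2$ with the absolute normalized discriminant, rather than something like $(2r-2)r$ from the unnormalized form, and that the primitive-element choice costs only $\mathrm{O}(1)$ in height, uniformly in $P$. I would handle the primitive element by running over the finitely many linear forms with coefficients in $\{0,\dots,r^2\}$ that separate the conjugates of $P$. The error terms then depend only on $r$, $N$, $\KK$, the embedding $\phi$ and the choice of height representative, as the $\mathrm{O}(1)$ allows.
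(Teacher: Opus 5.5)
Your proposal is correct and follows the paper's route: embed $Y$ by the very ample bundle $M^{\otimes m_0}$, use $h_{M^{\otimes m_0}} = m_0 h_M + \mathrm{O}(1)$, and invoke Silverman's discriminant bound on $\PP^N$. Your compositum estimate $\mathrm{d}_{\KK\FF} \leq \mathrm{d}_{\KK} + \mathrm{d}_{\FF}$ also makes explicit the passage from $\QQ(P)$ to $\KK(P)$, which the paper leaves implicit; alternatively, you could apply Silverman's theorem relative to $\KK$ directly.
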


\begin{proof}
The case that $Y = \PP^n$ and $M = \Osh_{\PP^n}(1)$ follows upon arguing either as in \cite[Proof of Theorem 2]{Silverman:1984} or deducing the result following the approach of \cite[Proposition 1.6.9]{Bombieri:Gubler}.  The more general case then follows in light of the relation that 
$$
h_{M^{\otimes m_0}}(x) = m_0 h_M(x) + \mathrm{O}(1) \text{.}
$$
\end{proof}

\begin{remark}
While we expect Lemma \ref{discrim:eqn:inequality} not to be optimal, in general, we do remark that in many ways it can be seen as an effectively computable upper bound for the discriminant.
\end{remark}

Theorem \ref{lang:conjecture:bounded:degree} is a special case of Theorem \ref{DM:Bombieri-Lang}.  But, in fact, we deduce Theorem \ref{DM:Bombieri-Lang} from Theorem \ref{lang:conjecture:bounded:degree}.

\begin{theorem}\label{lang:conjecture:bounded:degree}  Fix a finite set of places $S \subset M_{\KK}$ that contains all archimedean places and assume that Conjecture \ref{Admissible:pairs:Vojta:Main:Conj:Disc:Bounded:Degree} holds true.  Let $(Y,D_0)$ be a projective normal crossings pair, defined over $\KK$, with the property that the log canonical divisor $\K_{(Y,D_0)} := \K_Y + D_0$ is big.  Fix an ample line bundle $M$ on $Y$ together with a positive integer $m_0>0$ which is such that the line bundle $M^{\otimes m_0}$ is very ample.  Fix a positive integer $r > 0$ and assume that we can write
\begin{equation}\label{lang:conjecture:bounded:degree:soltn:set:key:line:bundle:relation}
\K_{(Y,D_0)}^{\otimes m} \simeq M^{\otimes n} \otimes N
\end{equation}
for some effective line bundle $N$ and some positive integers $m,n>0$ which are such that 
\begin{equation}\label{lang:conjecture:bounded:degree:soltn:set:key:inequality}
n > m m_0(2r-2) \text{.}
\end{equation}
Then the set
\begin{equation}\label{bounded:degree:integral:points}
\Gamma_{(Y,D_0;S),r}(\KK) := \{\text{$(D_0,S)$-integral points $y \in Y \setminus \operatorname{Supp} D_0$: $[\KK(y):\QQ] \leq r$} \}
\end{equation}
is not Zariski dense.
\end{theorem}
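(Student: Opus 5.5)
We apply Conjecture \ref{Admissible:pairs:Vojta:Main:Conj:Disc:Bounded:Degree} on $Y$ with $D_1 = 0$, so that the counting function term drops out. We take $L := M$, which is ample and hence big. Since $[\KK(y):\QQ] \leq r$ forces $[\KK(y):\KK] \leq r$, we take $d := r$. Fix $\epsilon > 0$, to be chosen below. The conjecture then gives a proper Zariski closed $Z \subsetneq Y$ such that every $y \in \Gamma_{(Y,D_0;S),r}(\KK) \setminus Z$ satisfies
$$
\mathrm{d}_{\KK}(y) \geq h_{\K_Y + D_0}(y) - \epsilon h_M(y) - \mathrm{O}(1) \text{.}
$$
The strategy is to play this lower bound for the discriminant against the upper bound from Lemma \ref{height:function:lower:bound:discriminant:term}. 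That lemma gives $\mathrm{d}_{\KK}(y) \leq (2r-2) m_0 h_M(y) + \mathrm{O}(1)$, since $[\KK(y):\QQ]\le r$.

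Next we bound $h_{\K_Y+D_0}$ from below using \eqref{lang:conjecture:bounded:degree:soltn:set:key:line:bundle:relation}. By functoriality,
$$
m\, h_{\K_Y+D_0}(y) = n\, h_M(y) + h_N(y) + \mathrm{O}(1) \text{.}
$$
Since $N$ is effective, fix a nonzero section $s$ of $N$. Then $h_N \geq -\mathrm{O}(1)$ outside the base locus $B := \operatorname{Supp}(\operatorname{div} s)$, which is a proper closed subset. Hence $h_{\K_Y+D_0}(y) \geq \frac{n}{m} h_M(y) - \mathrm{O}(1)$ for $y \notin B$.

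Combining the three inequalities, for $y \in \Gamma_{(Y,D_0;S),r}(\KK) \setminus (Z \cup B)$ we get
$$
\left(\frac{n}{m} - \epsilon - (2r-2)m_0\right) h_M(y) \leq \mathrm{O}(1) \text{.}
$$
By \eqref{lang:conjecture:bounded:degree:soltn:set:key:inequality}, $\frac{n}{m} > (2r-2)m_0$, so we can choose $\epsilon$ with $0<\epsilon < \frac{n}{m} - (2r-2)m_0$. Then $h_M(y)$ is bounded on these points. Because $M$ is ample and the degree $[\KK(y):\QQ]\leq r$ is bounded, Northcott's theorem shows the set of such $y$ is finite; call it $F$. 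Therefore $\Gamma_{(Y,D_0;S),r}(\KK) \subseteq Z \cup B \cup F$, which is a proper closed subset because $Y$ is irreducible. So $\Gamma_{(Y,D_0;S),r}(\KK)$ is not Zariski dense.

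The step needing most care is the bookkeeping of the conjecture's hypotheses. Its exceptional set $Z$ must be independent of the particular integral set $R$; the conjecture supplies this, and we apply it to $R = \Gamma_{(Y,D_0;S),r}(\KK)$ itself. We also need $D_1 = 0$ to be admissible, which holds because $D_0 + 0$ is normal crossings. The only other subtle point is ensuring the $\mathrm{O}(1)$ constants are uniform across the three inequalities, and this follows from fixing the Weil functions and heights once and for all.
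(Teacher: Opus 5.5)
Your proof is correct and follows essentially the same route as the paper. Like the paper, you apply Conjecture \ref{Admissible:pairs:Vojta:Main:Conj:Disc:Bounded:Degree} with $D_1 = 0$, bound $\mathrm{d}_{\KK}(y)$ from above via Lemma \ref{height:function:lower:bound:discriminant:term}, use $\K_{(Y,D_0)}^{\otimes m} \simeq M^{\otimes n} \otimes N$ to get a lower bound for $h_{\K_Y+D_0}$, and finish with Northcott. Your choices $L = M$ and $d = r$ (instead of $M^{\otimes n}$ and $r/[\KK:\QQ]$) are harmless, and removing the support of a section of $N$ to get $h_N \geq -\mathrm{O}(1)$ makes precise a step the paper leaves implicit.
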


\begin{proof}
Let $y \in Y(\overline{\KK})$ have the property that $[\KK(y):\QQ] \leq r$.  Then by Lemma \ref{height:function:lower:bound:discriminant:term},
\begin{equation}\label{lang:conjecture:bounded:degree:soltn:set:eqn1}
\mathrm{d}_{\KK}(y) = \mathrm{d}_{\KK(y)} \leq (2r-2)m_0 h_M(y) + \mathrm{O}(1) \text{.}
\end{equation}

Let $\epsilon > 0$ and set $\epsilon' = \epsilon / m$.  By assumption, we may write
\begin{equation}\label{canonical:bundle:formula:eqn}
\K_{(Y,D_0)}^{\otimes m} \simeq M^{\otimes n} \otimes N
\end{equation}
for some effective line bundle $N$ and some positive integers $m,n>0$ which are such that $n > m(2r-2)m_0$.

Conjecture \ref{Vojta:Main:Conj:Disc:Bounded:Degree}, in the form of Conjecture \ref{Admissible:pairs:Vojta:Main:Conj:Disc:Bounded:Degree}, applied to $\epsilon'$, $M^{\otimes n}$ and with respect to the divisor $D_1 = 0$ yields the inequality that 
\begin{equation}\label{lang:conjecture:bounded:degree:soltn:set:eqn2}
h_{\K_{(Y,D_0)}}(y) - \frac{\epsilon}{m} h_{M^{\otimes n}}(y) \leq \mathrm{d}_{\KK}(y) + \mathrm{O}(1)
\end{equation}
for all $(D_0,S)$-integral points $y \in Y \setminus Z$ with $[\KK(x):\KK] \leq d$.  Here, $Z \subsetneq Y$ is a proper Zariski closed subset and $d = r / [\KK:\QQ]$.

Now, using \eqref{canonical:bundle:formula:eqn} and \eqref{lang:conjecture:bounded:degree:soltn:set:eqn1}, \eqref{lang:conjecture:bounded:degree:soltn:set:eqn2} implies that
\begin{equation}\label{lang:conjecture:bounded:degree:soltn:set:eqn3}
(n(1-\epsilon) - m m_0 (2r - 2))h_M(x) \leq \mathrm{O}(1) \text{.}
\end{equation}
 In \eqref{lang:conjecture:bounded:degree:soltn:set:eqn3}, by ensuring that $\epsilon > 0$ is sufficiently small, e.g., $0 < \epsilon \ll 1$, we have that 
 $$
 n(1-\epsilon) - m m_0 (2r-2) > 0 \text{.}
 $$
 So, the Northcott property, \cite[Theorem 2.4.9]{Bombieri:Gubler}, implies that there are at most finitely many solutions to \eqref{lang:conjecture:bounded:degree:soltn:set:eqn3}.    The non-Zariski density of the set \eqref{bounded:degree:integral:points} then follows.
\end{proof}

In the following Examples \ref{lang:conjecture:bounded:degree:example}, \ref{general:complete:intersections:example}, \ref{n:fold:Petri:example} and \ref{Extremal:curves:example}, we show that the hypothesis \eqref{canonical:bundle:formula:eqn} is indeed not empty; we also make some remarks that pertain to the inequality \eqref{lang:conjecture:bounded:degree:soltn:set:key:inequality} specifically.  We place particular emphasis on the case of dimension $1$ and discuss connections to the concept of \emph{arithmetic degree of irrationality} from \cite{Smith:Vogt:2022}.

\begin{example}\label{lang:conjecture:bounded:degree:example}
Consider the simplest non-trivial instance of the above inequality \eqref{lang:conjecture:bounded:degree:soltn:set:key:inequality}, namely the case that $r = 2$, $m_0 = 1$, $m=1$ and $n = 3$.  Then, we want to write 
$\K_Y \simeq M^{\otimes 3} \otimes \Osh_Y(N)$
with $M$ very ample.  So, for example, consider the case of a non-singular degree $d$ and genus $g$ plane curve $C \subset \PP^2_{\QQ} \text{.}$  Then 
$\K_C \simeq \Osh_C(d-3)$
and
$g = \frac{1}{2}(d-1)(d-2) \text{.}$
Thus, if $d = 6$, then $g = 10$ and
$\K_C \simeq \Osh_C(3) \simeq H^{\otimes 3}$
for $H = \Osh_C(1)$ a hyperplane class.  
\end{example}

\begin{example}\label{general:complete:intersections:example}
The considerations of Example \ref{lang:conjecture:bounded:degree:example} will apply more generally for nonsingular complete intersection curves and also for higher dimensional nonsingular complete intersections.  However, for the case of non-singular complete intersection curves, in applying Theorem \ref{lang:conjecture:bounded:degree} along the lines just described (for suitable values of $n$ and $r$ that satisfy the inequality \eqref{canonical:bundle:formula:eqn}) we note that the conclusion will not improve on the known results about degrees of arithmetic irrationality from \cite{Smith:Vogt:2022}.  
\end{example}

\begin{example}\label{n:fold:Petri:example}
In dimension $1$, even after passing to a finite extension of the base number field $\KK$, the simplest instance of the relation  \eqref{lang:conjecture:bounded:degree:soltn:set:key:line:bundle:relation}, namely the question of writing the canonical bundle $\K_C$ of a curve $C$ as 
\begin{equation}\label{n:fold:petri:relation}
\K_C \simeq M^{\otimes n}
\end{equation}
for some very ample line bundle $M$ will not hold for Brill-Noether general curves.  Indeed, as follows from \cite{Grieve:Petri}, for such general curves $C$, it is not possible to write the canonical bundle $\K_C$ in the form \eqref{n:fold:petri:relation} for some very ample line bundle $M$ and $n \geq 3$.
\end{example}

\begin{example}\label{Extremal:curves:example}
Continuing to work in dimension $1$, as suggested by Examples \ref{lang:conjecture:bounded:degree:example} and \ref{n:fold:Petri:example}, it is interesting to study the hypothesis and conclusion of Theorem \ref{lang:conjecture:bounded:degree} for certain classes of curves that are not Brill-Noether general.  As a specific example, again passing to a finite extension of the base number field if necessary, consider the case of a non-plane extremal curve $C \subseteq \PP^s$ of degree $d = n(s-1)+2 \geq 2s +1$.  Here $n = \left\lfloor \frac{d-1}{s-1} \right\rfloor $.  Then $\K_C \simeq \Osh_C((n-1)H)$, for $H \subset C$ a hyperplane divisor.  Further, $C$ admits a complete $g^1_{n+1}$.  (We refer to \cite[Corollary III 2.6]{ACGH} for more details.)  As a consequence, we have an upper bound for the gonality of $C$, namely $\operatorname{gon}(C) \leq n + 1$.  On the other hand, we can apply Theorem \ref{lang:conjecture:bounded:degree} for degrees $r < \frac{n+2}{2}$.  Its conclusion, for the case that $\KK = \QQ$ give that, under the assumption of Conjecture \ref{Vojta:Main:Conj:Disc:Bounded:Degree}, that 
$$\operatorname{a.irr}_{\QQ}(C) \geq \frac{n}{2}+1 > \frac{\operatorname{gon}_{\QQ}(C)}{2} \text{.}$$   Such considerations should be compared with the general inequalities 
$$
\frac{\operatorname{gon}_{\KK}(C)}{2} \leq \operatorname{a.irr}_{\KK}(C) \leq \operatorname{gon}_{\KK}(C) \text{.}
$$
that are discussed and studied in \cite[p.~424]{Smith:Vogt:2022}.
\end{example}

Finally, we prove Theorem \ref{DM:Bombieri-Lang}.  Let us mention that Theorem \ref{DM:Bombieri-Lang} can be deduced from Theorem \ref{admissible:pair:DM:stack:conj} using Lemma \ref{height:function:lower:bound:discriminant:term}.  On the other hand, we deduce it here more directly as an application of Theorem \ref{lang:conjecture:bounded:degree}.

\begin{proof}[Proof of Theorem \ref{DM:Bombieri-Lang}]
The set
\begin{equation}\label{bounded:degree:set:cover:points:stack}
\left\{\text{$(\mathcal{D}_0,S)$ integral points $x \in \mathcal{X}(\overline{\KK}) : [\KK(x):\KK] \leq d$} \right\}
\end{equation}
is contained in the image, under $\pi$, of the set
\begin{equation}\label{bounded:degree:set:cover}
\left\{\text{$(D'_0,S)$-integral points $y \in Y(\overline{\KK}) : [\KK(y):\KK] \leq d  \cdot \operatorname{deg}(\pi)$} \right\} \text{.}
\end{equation}

Let $\operatorname{Ram}(\pi)$ be the ramification divisor.  Then $\K_Y + D_0' = \pi^*(\K_{\mathcal{X}} + \mathcal{D}_0) + \operatorname{Ram}(\pi)$ is a big line bundle on $Y$.   (Compare with \cite[Lemma 2.3]{Ascher:Javanpeykar:2017}.) 

The conclusion of Theorem \ref{DM:Bombieri-Lang} follows from the conclusion of Theorem \ref{lang:conjecture:bounded:degree} applied to the case that 
$r := (d \cdot \operatorname{deg}(\pi)) [\KK:\QQ]$.
\end{proof}

\providecommand{\bysame}{\leavevmode\hbox to3em{\hrulefill}\thinspace}
\providecommand{\MR}{\relax\ifhmode\unskip\space\fi MR }
\providecommand{\MRhref}[2]{%
  \href{http://www.ams.org/mathscinet-getitem?mr=#1}{#2}
}
\providecommand{\href}[2]{#2}

\end{document}